\newcommand{\nc}{\newcommand}
\newcommand{\lbutcher}{{\raise 3.9pt\hbox{$\circ$}}\hskip -1.9pt{\scriptstyle \searrow}}
\numberwithin{equation}{section}
\newtheorem{Theorem}{Theorem}[section]
\newtheorem*{Theorem*}{Theorem}
\newtheorem{Lemma}[Theorem]{Lemma}
\newtheorem{Proposition}[Theorem]{Proposition}
 { \theoremstyle{definition}

\newtheorem{Remark}[Theorem]{Remark} }
\nc{\vep}{\varepsilon} \nc{\ep}{\epsilon}
\nc{\sigmat}{\widetilde\sigma}
\nc{\ostar}{\overline{*}}
\nc{\grad}[1]{^{({#1})}}
\nc{\fil}[1]{_{#1}}
\nc{\BA}{{\Bbb A}} \nc{\CC}{{\Bbb C}} \nc{\DD}{{\Bbb D}}
\nc{\EE}{{\Bbb E}} \nc{\FF}{{\Bbb F}} \nc{\GG}{{\Bbb G}}
\nc{\HH}{{\Bbb H}} \nc{\LL}{{\Bbb L}} \nc{\NN}{{\Bbb N}}
\nc{\PP}{{\Bbb P}} \nc{\QQ}{{\Bbb Q}} \nc{\RR}{{\Bbb R}}
\nc{\TT}{{\Bbb T}} \nc{\VV}{{\Bbb V}} \nc{\ZZ}{{\Bbb Z}}
\nc{\Cal}[1]{{\mathcal {#1}}}
\nc{\mop}[1]{\mathop{\hbox {\rm #1} }}
\nc{\mopl}[1]{\mathop{\hbox {\rm #1} }\limits}
\nc{\frakg}{{\frak g}}
\nc{\g}[1]{{\frak {#1}}}
\nc{\wt}{\widetilde}
\nc{\wh}{\widehat}
\nc{\un}{\hbox{\bf 1}}
\nc\fleche[1]{\mathop{\hbox to #1 mm{\rightarrowfill}}\limits}
\def\semi{\mathrel{\times}\kern -.85pt\joinrel\mathrel{\raise
1.4pt\hbox{${\scriptscriptstyle |}$}}}
\def\fleche#1{\mathop{\hbox to #1 mm{\rightarrowfill}}\limits}
\def\gfleche#1{\mathop{\hbox to #1 mm{\leftarrowfill}}\limits}
\def\inj#1{\mathop{\hbox to #1 mm{$\lhook\joinrel$\rightarrowfill}}\limits}
\def\ginj#1{\mathop{\hbox to #1 mm{\leftarrowfill$\joinrel\rhook$}}\limits}
\def\surj#1{\mathop{\hbox to #1 mm{\rightarrowfill\hskip 2pt\llap{$\rightarrow$}}}\limits}
\def\gsurj#1{\mathop{\hbox to #1 mm{\rlap{$\leftarrow$}\hskip 2pt
\leftarrowfill}}\limits}
\def \restr#1{\mathstrut_{\textstyle |}\raise-6pt\hbox{$\scriptstyle #1$}}
\def \srestr#1{\mathstrut_{\scriptstyle |}\hbox to
-1.5pt{}\raise-4pt\hbox{$\scriptscriptstyle #1$}}
\begin{document}
\allowdisplaybreaks

\newcommand{\arXivNumber}{2108.11103}

\renewcommand{\PaperNumber}{023}

\FirstPageHeading

\ShortArticleName{Post-Lie Magnus Expansion and BCH-Recursion}

\ArticleName{Post-Lie Magnus Expansion and BCH-Recursion}

\Author{Mahdi J.~Hasan AL-KAABI~$^{\rm a}$, Kurusch EBRAHIMI-FARD~$^{\rm b}$ and Dominique MANCHON~$^{\rm c}$}

\AuthorNameForHeading{M.J.H. Al-Kaabi, K.~Ebrahimi-Fard and D.~Manchon}

\Address{$^{\rm a)}$~Mathematics Department, College of Science, Mustansiriyah University,\\
\hphantom{$^{\rm a)}$}~Palestine Street, P.O.~Box 14022, Baghdad, Iraq}
\EmailD{\href{mailto:mahdi.alkaabi@uomustansiriyah.edu.iq}{mahdi.alkaabi@uomustansiriyah.edu.iq}}

\Address{$^{\rm b)}$~Department of Mathematical Sciences, Norwegian University of Science and Technology,\\
\hphantom{$^{\rm b)}$}~Trondheim, Norway}
\EmailD{\href{mailto:kurusch.ebrahimi-fard@ntnu.no}{kurusch.ebrahimi-fard@ntnu.no}}

 \Address{$^{\rm c)}$~Laboratoire de Math\'ematiques Blaise Pascal, CNRS et Universit\'e Clermont-Auvergne\\
\hphantom{$^{\rm c)}$}~(UMR 6620), 3 place Vasar\'ely, CS 60026, F63178 Aubi\`ere, France}
\EmailD{\href{mailto:dominique.manchon@uca.fr}{dominique.manchon@uca.fr}}

\ArticleDates{Received August 26, 2021, in final form March 10, 2022; Published online March 23, 2022}

\Abstract{We identify the Baker--Campbell--Hausdorff recursion driven by a weight $\lambda=1$ Rota--Baxter operator with the Magnus expansion relative to the post-Lie structure naturally associated to the corresponding Rota--Baxter algebra. Post-Lie Magnus expansion and BCH-recursion are reviewed before the proof of the main result.}

\Keywords{post-Lie algebra; pre-Lie algebra; Rota--Baxter algebra; Magnus expansion; BCH-formula; rooted trees}

\Classification{16T05; 16T10; 16T30; 17A30}\vspace{1mm}

\section{Introduction}

The present paper consists in a quick survey of post-Lie algebras, Baker--Campbell--Haudorff recursion, Rota--Baxter algebras and post-Lie Magnus expansion (Sections~\ref{s1}, \ref{s2} and \ref{s3}), followed by a new result in Section~\ref{s4}, which establishes an equality between two seemingly different formal series: the Baker--Campell--Hausdorff recursion in a weight-one Rota--Baxter algebra, and the post-Lie Magnus expansion relative to the associated post-Lie algebra structure described in~\cite{BGN10}. Our motivation comes from a result obtained in 2006 by the second and the third author together with Li Guo~\cite{KLM06}, identifying the BCH-recursion with the pre-Lie Magnus expansion in the weight-zero case.

Interest in Magnus-type expansions results from their appearance in the context of numerical integration methods for Lie group valued problems \cite{IserlesNorsett99}. In \cite{CEO20}, we studied the relation between the classical and the post-Lie Magnus expansions by looking at a non-autonomous matrix-valued initial value problem from the viewpoint of the theory of numerical Lie group integrators. Post-Lie algebras and the post-Lie Magnus expansion play a central part in the latter. In~this context post-Lie algebras characterise the relation between two Lie algebras (one coming from the Jacobi Lie bracket and the other from the torsion Lie bracket, in the context of a flat connection with constant torsion). This relation can be lifted to the level of the (completed) enveloping algebra (of the Lie algebra implied by the torsion Lie bracket). In~a~nut\-shell, the post-Lie Magnus expansion naturally appears in the context of backward error analysis for the Lie--Euler method. This is consistent with the fact that the pre-Lie Magnus expansion plays an analogous role with respect to backward error analysis for the Euler method.

The well-known Baker--Campbell--Hausdorff (BCH) formula ${\rm BCH}(x, y)$ is a formal power series, which lives in the completion of the free Lie algebra $\Cal{L}(x, y)$ generated (over a base field~$K$ of characteristic zero) by the two non-commutating variables $x$ and $y$. It is defined~by
\begin{displaymath}
\exp(x)\exp(y)= \exp\big( {\rm BCH}(x, y)\big)= \exp\big( x + y+ \wt{{\rm BCH}}(x, y)\big)
\end{displaymath}
or
\begin{displaymath}
{\rm BCH}(x, y)=\log\big( \exp(x)\exp(y)\big)= x + y + \wt{{\rm BCH}}(x, y).
\end{displaymath}
It plays a prominent role in modern mathematics \cite{AAFCCC020,BF12}.\footnote{The remainder Baker--Campbell--Hausdorff series, $\wt{{\rm BCH}}(x,y)$, is denoted by ${\rm BCH}(x,y)$ in \cite{KLM06}. We adopt here a more conventional notation.}

A fruitful connection between the BCH-series and the notion of Rota--Baxter algebra has been explored in \cite{KLK04, KLK05, KLM06}. The latter originated in the seminal 1960 article \cite{B60} by the American mathematician G.~Baxter, which in turn was motivated by F. Spitzer's 1956 article~\cite{Sp56}. Baxter's algebra was further developed foremost in the commutative realm in the 1960s and '70s by P.~Cartier, G.-C.~Rota and F.V.~Atkinson, among others, from algebraic, combinatorial and analytic viewpoints. We refer the reader to the review article \cite{EP19} as well as the monograph~\cite{Guo12} for details.

A weight-$\lambda$ Rota--Baxter operator on an associative $K$-algebra $\Cal{A}$ is a $K$-linear map $\Cal{R}$: \mbox{$\Cal{A} \longrightarrow \Cal{A}$}, satisfying the Rota--Baxter identity of weight $\lambda \in K$:
\begin{equation}
\label{RBidentity}
\Cal{R}(x)\Cal{R}(y)
= \Cal{R} \big(\Cal{R}(x)y + x \Cal{R}(y) + \lambda xy \big), \qquad x,y \in\Cal A.
\end{equation}
The pair $(\Cal A,\Cal R)$ is a weight $\lambda$ Rota--Baxter algebra.\footnote{The convention for the weight is with the opposite sign in \cite{KLM06}.} For example, the indefinite Riemann integral satisfies \eqref{RBidentity} when the weight $\lambda=0$ (integration by parts). The linear map $\widetilde{\Cal{R}} := - \lambda {\rm id}_{\Cal{A}} - \Cal{R}$ is also Rota--Baxter of weight $\lambda$, and satisfies together with $\Cal R$ the mixed identity
\begin{displaymath}
\Cal R(x)\widetilde{\Cal R}(y)
=\widetilde{\Cal R}\big(\Cal R(x)y\big)+\Cal R\big(x\widetilde{\Cal R}(y)\big), \qquad x,y \in\Cal A.
\end{displaymath}
Starting from a Rota--Baxter operator $\Cal R$ of weight $\lambda$, the BCH-recursion \cite{KLM06} is defined by
\begin{equation}\label{rec.1}
\chi_{\lambda}(a) := a + \frac{1}{\lambda} \widetilde{{\rm BCH}}\big(\Cal{R}\big(\chi_{\lambda}(a)\big),
\widetilde{\Cal{R}}\big(\chi_{\lambda}(a)\big)\big),\qquad a\in\Cal A.
\end{equation}

It lies at the heart of the solution of an exponential factorisation problem \cite{KLM06} and thereby permits the generalisation of a classical result for commutative Rota--Baxter algebras, known as Spitzer's identity \cite{Sp56}, to non-commutative Rota--Baxter algebras. The resulting non-com\-mu\-ta\-tive Spitzer identity says that for $a \in \Cal{A}$ the exponential
\begin{displaymath}
X:=\exp\bigg( \Cal{R}\bigg(\ \chi_{\lambda}\bigg( \frac{\log(1+t\lambda a)}{\lambda}\bigg)\bigg) \bigg)
\end{displaymath}
solves the fixed point equation
\begin{equation}
\label{FPE1}
X = 1 + t\Cal{R}(aX)
\end{equation}
 in the algebra $\Cal{A}[[t]]$ of formal series with coefficients in $\Cal A$. Here the formal parameter $t$ commutes with all elements in $\Cal{A}$.
More precisely, iterating the fixed point equation \eqref{FPE1} yields the rather non-trivial equality
\begin{displaymath}
1+ t\Cal{R}(a) + t^2\Cal{R}\big(a\Cal{R}(a)\big) + t^3\Cal{R}\big(a\Cal{R}\big(a\Cal{R}(a)\big)\big) + \cdots
= \exp\bigg( \Cal{R}\bigg(\ \chi_{\lambda}\bigg( \frac{\log(1+ t\lambda a)}{\lambda}\bigg)\bigg) \bigg).
\end{displaymath}
Thanks to the commuting parameter $t$, the last equality can be seen as between formal power series and therefore encompasses at each order a specific relation between coefficients. For instance, at order two, that is, comparing the coefficients of $t^2$, we have the identity
\begin{displaymath}
2 \Cal{R}\big(a\Cal{R}(a)\big) = \Cal{R}(a)\Cal{R}(a) - \Cal{R}\big([\Cal{R}(a),a] + \lambda a^2\big),
\end{displaymath}
which is easily verifiable in a Rota--Baxter algebra of weight~$\lambda$ by using the Rota--Baxter identity~\eqref{RBidentity} on the right-hand side. We note that the fixed point equation \eqref{FPE1} is reminiscent of the integral fixed point equation naturally associated to a linear matrix-valued initial value problem; the indefinite Riemann integral is a weight-zero Rota--Baxter map. Indeed, the series~\eqref{rec.1} turns out to be closely related to a well-known Lie algebra expansion due to W.~Mag\-nus~\cite{WM54}. This connection to the so-called Magnus expansion was studied in reference \cite{KLM06} in the case of the weight being zero ($\lambda=0$). The adequate algebraic setting is provided through the notion of pre-Lie algebra, which is naturally defined on any non-commutative Rota--Baxter algebra. In \cite{KM08} it was shown that the pre-Lie Magnus expansion can be expressed in terms of the BCH-recursion as follows
\begin{equation}
\label{preLieMag}
\Omega'_\rhd(a)
:= a + \sum_{n>0} \frac{B_n}{n!} L^{(n)}_{\rhd}\big[\Omega'_\rhd(a)\big](a)
= \chi_{\lambda}\bigg( \frac{\log(1+\lambda a)}{\lambda}\bigg).
\end{equation}
Here $B_n$ is the $n$-th Bernoulli number and $L_{\rhd}[x](y)=L^{(1)}_{\rhd}[x](y):=x \rhd y$ is the left-multiplication operator defined in terms of the aforementioned (left) pre-Lie product, denoted $\rhd$, on a non-commutative Rota--Baxter algebra. Note that the weight $\lambda$ is absorbed in the definition of the pre-Lie product. In the weight-zero case, \eqref{preLieMag} boils down to
\begin{equation}
\label{preLieMag-zero}
\Omega'_\rhd(a)
= \chi_{0}(a).
\end{equation}
In particular, for the indefinite Riemann integral, the pre-Lie product is defined for -- matrix-valued -- functions $A$, $B$ as $(A \rhd B)(t):=\big[\int_0^tA(s){\rm d}s,B(t)\big]$. When inserted in \eqref{preLieMag}, one recovers Magnus' original expansion \cite{WM54}.

Recall that any Rota--Baxter algebra with nonzero weight gives rise to a post-Lie algebra structure \cite{BGN10}. In this work, we describe a close relationship between the BCH-recursion \eqref{rec.1} in the nonzero weight case and the Magnus expansion in its post-Lie version \cite{ELM2015, EI2018, EIM2017,Men2020}. Our main result (Theorem \ref{main Theorem}) shows that the post-Lie Magnus expansion and the BCH-recursion in~\eqref{rec.1} coincide in the context of a Rota--Baxter algebra of weight 1 endowed with its naturally associated post-Lie structure. This is an extension to nonzero weight of one of the main results of \cite{KLM06}, resumed by \eqref{preLieMag-zero}, identifying the weight zero BCH-recursion with the pre-Lie Magnus expansion. The special role of weight one here simply comes from the definition of the post-Lie structure \eqref{pl1}, \eqref{pl2}, and any Rota--Baxter algebra with nonzero weight can be set to weight one by an appropriate rescaling of the Rota--Baxter operator.

We close this introduction by noting that the Magnus expansion, in its various forms (classical \cite{WM54,MP1970}, pre-Lie \cite{AG81, CP13, KM08} and post-Lie \cite{ELM2015, EI2018, EIM2017,Men2020}), has been studied in applied mathematics, control theory, physics and chemistry. See reference \cite{BCOR08} for details on the classical Magnus expansion in applied mathematics. The reader can also find a brief summary in the recent work~\cite{CEO20}.

This paper consists of four sections accompanied by two appendices. In Section~\ref{s1}, we review some basic topics related to post-Lie algebras and their universal enveloping algebras. The post-Lie structure defined on any Rota--Baxter algebra is recalled from~\cite{BGN10}. Section~\ref{s2} contains the description of the Baker--Campbell--Hausdorff recursion and its inverse, as well as their properties. Several important details on the post-Lie Magnus expansion and its inverse are included in Section~\ref{s3}. Section~\ref{s4} is the main part of this work, in which the identification of the post-Lie Magnus expansion with the BCH-recursion is proven. Finally, the two Appendices \ref{C.pl-Magnus} and \ref{C.ipl-Magnus} contain low-order computations of the post-Lie Magnus expansion and its inverse.

\section{Post-Lie algebras}\label{s1}

A post-Lie algebra is a Lie algebra $(\Cal{L},[\cdot\,,\cdot])$ together with a bilinear mapping $\rhd\colon \Cal{L} \times \Cal{L} \longrightarrow \Cal{L}$, which is compatible with the Lie bracket in the following sense
\begin{gather}\label{eq1}
x \rhd [y,z] = [x\rhd y,z]+ [y, x \rhd z],
\\
\label{eq2}
[x,y]\rhd z = a_{\rhd}(x,y,z)-a_{\rhd}(y,x,z),
\end{gather}
for any $x, y, z \in \Cal{L}$. Here, $a_{\rhd}(x,y,z)$ is the associator defined by
\begin{displaymath}
a_{\rhd}(x,y,z)=x \rhd (y \rhd z) - (x \rhd y) \rhd z.
\end{displaymath}
Any Lie algebra can be seen as a post-Lie algebra by setting the second product $\rhd$ to zero. Another possibility is to take for the second product $\rhd$ the opposite of the Lie bracket.

A (left) pre-Lie algebra is an abelian post-Lie algebra, i.e., a post-Lie algebra with Lie bracket set to zero. The defining relation is the left pre-Lie identity
\begin{equation}
\label{left-pl}
0=a_\rhd(x,y,z)-a_\rhd(y,x,z).
\end{equation}
We refer the reader to \cite{AL2011} for a short survey on pre-Lie algebras. The post-Lie operation $\rhd$ permits to produce two other operations:
\begin{gather*}
[\![x, y]\!] := x \rhd y - y \rhd x + [x, y],
\\
x \RHD y := x \rhd y + [x, y],
\end{gather*}
for all $x, y \in \Cal{L}$. From \eqref{eq1} and \eqref{eq2}, one can see that $\big(\Cal{L}, [\![\cdot\,,\cdot ]\!]\big)$ forms a Lie algebra, denoted $\tilde{\Cal L}$. In the case of an abelian post-Lie algebra, this amounts to Lie admissibility of pre-Lie algebras. The triple $\big(\Cal{L}, - [\cdot\,,\cdot ], \RHD\big)$ forms another post-Lie algebra \cite{CEO20, HA13} sharing the same double Lie bracket, i.e.,
\begin{displaymath}
[\![x, y]\!]
= x \rhd y - y \rhd x + [x, y]
=x \RHD y - y \RHD x - [x, y].
\end{displaymath}
For more details on post-Lie algebras, we refer to \cite{CEMK18,ELM2015,EI2018,HA13} and references therein.

\subsection{The universal enveloping algebra of a post-Lie algebra}

Inspired by the work of J.-M.~Oudom and D.~Guin in the pre-Lie context \cite{OG2008}, the authors in~\cite{ELM2015} consider the enveloping algebra $\big(\Cal{U}(\Cal{L}),\cdot\big)$ of the Lie algebra $\big(\Cal{L}, [\cdot\,,\cdot ]\big)$ underlying a post-Lie algebra $\big(\Cal{L}, [\cdot\,,\cdot ],\rhd\big)$. The post-Lie product $\rhd$ is then extended to $\Cal{L} \otimes \Cal{U}(\Cal{L}) \rightarrow \Cal{U}(\Cal{L})$ by requiring $x\rhd\un:=0$ and
\begin{displaymath}
x\rhd(x_1\cdots x_n) := \sum_{i=1}^{n}{x_1 \cdots x_{i-1}(x \rhd x_i)x_{i+1}\cdots x_n},
\end{displaymath}
for all $x, x_1, \ldots, x_n \in \Cal{L}$. Here, $\mathbf{1}$ denotes the unit in $\Cal{U}(\Cal{L})$. Recall that the enveloping alge\-bra~$\Cal{U}(\Cal{L})$ together with the product $\cdot$ and the unshuffle coproduct has the structure of a~non-commutative, co-commutative Hopf algebra. The unshuffle coproduct $\Delta$ is defined for all letters $x \in \Cal{L} \hookrightarrow \Cal{U}(\Cal{L})$, by $\Delta(x):= x \otimes \mathbf{1} + \mathbf{1} \otimes x$ and extended multiplicatively. We employ Sweedler's notation, $\Delta(X):= X_{(1)} \otimes X_{(2)}$, for the coproduct of any $X \in \Cal{U}(\Cal{L})$. The final defi\-ni\-tion of the extended post-Lie product on $\Cal{U}(\Cal{L})$, together with its properties, is given by the next two propositions.
\begin{Proposition}[{\cite[Proposition 3.1]{ELM2015}}]
There is a unique extension of the post-Lie product $\rhd$ from $\Cal{L}$ to $\Cal{U}(\Cal{L})$ satisfying:
\begin{gather*}
\mathbf{1} \rhd X = X,
\\
x X \rhd y = x \rhd (X \rhd y) - (x \rhd X) \rhd y,
\\
X \rhd Y Z = \big(X_{(1)} \rhd Y\big)\big(X_{(2)} \rhd Z\big),
\end{gather*}
for all $x, y \in \Cal{L}$, and $X, Y, Z \in \Cal{U}(\Cal{L})$.
\end{Proposition}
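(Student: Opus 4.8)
The plan is to establish uniqueness and existence separately, the first by a recursion argument, the second by building $\rhd$ on a Poincar\'e--Birkhoff--Witt basis. For \emph{uniqueness}, note that the three displayed identities, together with the action of $\Cal L$ on $\Cal U(\Cal L)$ already fixed by the derivation formula preceding the statement, determine $X\rhd Y$ completely. Indeed, the third identity $X\rhd YZ=(X_{(1)}\rhd Y)(X_{(2)}\rhd Z)$, together with $\mathbf 1\rhd X=X$, reduces any $X\rhd W$ to evaluations $X\rhd y$ with $y\in\Cal L$, by induction on the filtration degree of the right-hand factor; and for those, the second identity $xX\rhd y=x\rhd(X\rhd y)-(x\rhd X)\rhd y$ lowers the filtration degree of the \emph{left} factor --- on the right-hand side $x\rhd(\cdot)$ is the given $\Cal L$-action, and $x\rhd X$ has filtration degree no larger than that of $X$, hence strictly smaller than that of $xX$ --- with base case $\mathbf 1\rhd y=y$. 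This double induction leaves only one possible value for $X\rhd Y$.

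For \emph{existence}, write $D_x\in\mathrm{End}\big(\Cal U(\Cal L)\big)$ for the operator $X\mapsto x\rhd X$ already at hand; by construction it is a derivation of the associative product of $\Cal U(\Cal L)$, and a short check on generators (using that $D_x$ preserves $\Cal L$ and that $\Delta$ is an algebra morphism) shows it is also a coderivation. The one genuinely post-Lie ingredient is the commutator identity
\[
[D_x,D_y]=D_{[\![x,y]\!]},\qquad [\![x,y]\!]=x\rhd y-y\rhd x+[x,y],\qquad x,y\in\Cal L,
\]
valid because both sides are derivations of $\Cal U(\Cal L)$, so it suffices to check it on $\Cal L$, where $x\rhd(y\rhd z)-y\rhd(x\rhd z)=a_\rhd(x,y,z)-a_\rhd(y,x,z)+(x\rhd y)\rhd z-(y\rhd x)\rhd z=[x,y]\rhd z+(x\rhd y-y\rhd x)\rhd z$ by \eqref{eq2}. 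Fixing an ordered basis $(e_i)$ of $\Cal L$, I would then \emph{define} operators $L_X\in\mathrm{End}\big(\Cal U(\Cal L)\big)$ on PBW basis monomials by $L_{\mathbf 1}:=\mathrm{id}$ and $L_{e_iX}:=D_{e_i}L_X-L_{e_i\rhd X}$ when $e_i$ is the leftmost (hence smallest) letter of the monomial $e_iX$; since $e_i\rhd X$ has strictly smaller filtration degree than $e_iX$, this recursion is well-founded. Extend $L$ linearly and set $X\rhd Y:=L_X(Y)$.

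The main obstacle is to check that this $\rhd$ obeys the three identities for all arguments, the delicate point being the second one: one must show $L_{xX}=L_xL_X-L_{x\rhd X}$ for \emph{every} $x\in\Cal L$ and $X\in\Cal U(\Cal L)$, i.e.\ that the recursion defining $L$ is consistent with the relations $e_ie_j-e_je_i-[e_i,e_j]$ of $\Cal U(\Cal L)$. A standard PBW-type argument reduces this consistency to the single case of two adjacent letters, which is exactly the commutator identity above together with $L_z=D_z$ for $z\in\Cal L$; here the post-Lie axioms \eqref{eq1}--\eqref{eq2} are essentially used --- the former already in guaranteeing that $D_x$ is a well-defined derivation of $\Cal U(\Cal L)$, the latter in the commutator identity. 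Once the second identity holds, the first holds by construction, and the third, $L_X(YZ)=(L_{X_{(1)}}Y)(L_{X_{(2)}}Z)$, follows by induction on the filtration degree of $X$: the case $X=\mathbf 1$ is $\Delta\mathbf 1=\mathbf 1\otimes\mathbf 1$, and for $X=xX'$ one expands $L_{xX'}=D_xL_{X'}-L_{x\rhd X'}$, applies the inductive hypothesis to $L_{X'}$ and to $L_{x\rhd X'}$ (legitimate, as $x\rhd X'$ has smaller filtration degree), and then combines the derivation and coderivation properties of $D_x$ with $\Delta(xX')=\Delta(x)\Delta(X')$. Finally $L_x=D_x$ on $\Cal L$ recovers the original post-Lie product, so $\rhd$ is genuinely an extension. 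Conceptually, the commutator identity says that $x\mapsto D_x$ is a Lie-algebra morphism from $\tilde{\Cal L}=\big(\Cal L,[\![\cdot\,,\cdot]\!]\big)$ to $\mathrm{End}\big(\Cal U(\Cal L)\big)$, whence it extends to an associative morphism on $\Cal U(\tilde{\Cal L})$; combined with the PBW identification of $\Cal U(\Cal L)$ and $\Cal U(\tilde{\Cal L})$ as coalgebras, this is the structural reason the extension exists.
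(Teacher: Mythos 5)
The paper itself gives no proof of this proposition: it is quoted as \cite[Proposition~3.1]{ELM2015}, so there is no in-text argument to compare yours against. Your proposal is, in substance, the standard construction from that reference (the post-Lie analogue of the Oudom--Guin extension \cite{OG2008}): uniqueness by a double induction on the lengths of the two arguments, and existence by a recursive definition of the left-multiplication operators $L_X$ whose consistency with the relations $xy-yx=[x,y]$ of $\Cal{U}(\Cal{L})$ rests on axiom \eqref{eq1} (which makes $D_x$ a well-defined derivation of $\Cal{U}(\Cal{L})$) and on the commutator identity $[D_x,D_y]=D_{[\![x,y]\!]}$ coming from axiom \eqref{eq2}; your verification of that identity is correct, and your closing remark that $x\mapsto D_x$ is a Lie morphism out of $\tilde{\Cal L}$ is exactly the structural content of the result. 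Two loose ends are worth tightening. First, in the uniqueness step your peeling argument via the third identity reduces $X\rhd(y_1\cdots y_n)$ to the values $X_{(i)}\rhd y_i$ but never produces the case $X\rhd\mathbf{1}$, which must be handled separately: taking $Y=Z=\mathbf{1}$ in the third identity gives $X\rhd\mathbf{1}=(X_{(1)}\rhd\mathbf{1})(X_{(2)}\rhd\mathbf{1})$, which together with $\mathbf{1}\rhd\mathbf{1}=\mathbf{1}$ forces $X\rhd\mathbf{1}=\epsilon(X)\mathbf{1}$ by induction on the coradical degree of $X$. Second, the ``standard PBW-type argument'' conceals the real work: to prove $L_{xX}=D_xL_X-L_{x\rhd X}$ for a letter $x$ that is not minimal in $xX$, one must already know the identity for arbitrary letters at all strictly lower degrees (because $x\rhd X$ redistributes $x$ inside the monomial), so the induction has to be run on total degree with the two-letter commutator computation as its engine rather than as the whole argument. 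Neither point is a flaw in the approach; both are gaps in the write-up that a referee would ask you to fill.
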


\begin{Proposition}[{\cite[Proposition 3.2]{ELM2015}}]
\label{prop:2}
The extended post-Lie product $\rhd$ on $\Cal{U}(\Cal{L})$ possesses the following properties:
\begin{gather*}
X \rhd \mathbf{1} = \epsilon(X),
\\
\epsilon(X \rhd Y) = \epsilon(X)\epsilon(Y),
\\
\Delta(X \rhd Y) = (X_{(1)} \rhd Y_{(1)}) \otimes (X_{(2)} \rhd Y_{(2)}),
\\
x X \rhd Y = x \rhd (X \rhd Y) - (x \rhd X) \rhd Y,
\\
X \rhd (Y \rhd Z) = (X_{(1)}\big(X_{(2)} \rhd Y\big)) \rhd Z,
\end{gather*}
for all $x \in \Cal{L}$ and $X, Y, Z \in \Cal{U}(\Cal{L})$, where $\epsilon\colon \Cal{U}(\Cal{L}) \rightarrow K$ is the counit map.
\end{Proposition}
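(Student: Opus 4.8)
The plan is to derive all five identities from the three characterising properties of the preceding Proposition by induction on the Poincar\'e--Birkhoff--Witt filtration degree, using throughout the standard Hopf-algebraic facts that $\Cal{L}$ generates $\Cal{U}(\Cal{L})$, consists of primitive elements, and that the unshuffle coproduct $\Delta$ is cocommutative and an algebra morphism.

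First I would establish $X \rhd \mathbf{1} = \epsilon(X)\mathbf{1}$. Writing $\mathbf{1} = \mathbf{1}\cdot\mathbf{1}$ and applying the property $X \rhd YZ = (X_{(1)} \rhd Y)(X_{(2)} \rhd Z)$ gives $X \rhd \mathbf{1} = (X_{(1)} \rhd \mathbf{1})(X_{(2)} \rhd \mathbf{1})$; for $X$ a product of $n\geq 1$ generators, cocommutativity isolates the two extreme Sweedler terms $\mathbf{1}\otimes X$ and $X\otimes\mathbf{1}$, while every other term is a product of two words of strictly smaller degree and so vanishes by the induction hypothesis (at least one factor being the $\epsilon$-value of a word of positive degree). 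This leaves $X\rhd\mathbf{1} = 2\,(X\rhd\mathbf{1})$, hence $X\rhd\mathbf{1}=0=\epsilon(X)\mathbf{1}$ in characteristic zero, the case $X\in\Cal{L}$ being part of the definition of the extension. The counit property $\epsilon(X\rhd Y)=\epsilon(X)\epsilon(Y)$ then follows by linearity and the splitting $\Cal{U}(\Cal{L})=K\mathbf{1}\oplus\ker\epsilon$: the case $Y=\mathbf{1}$ is what we just proved, and a short induction on the degree of $Y$, using the product rule to reduce $Y$ to a letter $y\in\Cal{L}$ together with the elementary fact that $x\rhd\Cal{U}(\Cal{L})\subseteq\ker\epsilon$ for $x\in\Cal{L}$, handles $Y\in\ker\epsilon$. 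I would also promote the relation $xX\rhd y = x\rhd(X\rhd y)-(x\rhd X)\rhd y$ from letters $y\in\Cal{L}$ to arbitrary $Y\in\Cal{U}(\Cal{L})$ by induction on the degree of $Y$, applying the product rule to both sides.

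With these in hand, the coproduct compatibility $\Delta(X\rhd Y)=(X_{(1)}\rhd Y_{(1)})\otimes(X_{(2)}\rhd Y_{(2)})$ is proved by induction on the degree of $X$. For $X=x\in\Cal{L}$ the left action $x\rhd(\,\cdot\,)$ on a word is a derivation and $\Delta$ is an algebra morphism, so the two sides agree on monomials in $\Cal{L}$ and hence everywhere; for $X=xX'$ the generalised left-multiplication rule expresses $\Delta(xX'\rhd Y)$ in terms of $\Delta(X'\rhd\,\cdot\,)$ and $\Delta((x\rhd X')\rhd\,\cdot\,)$, and the induction hypothesis applies because $x\rhd X'$ has no larger filtration degree than $X'$.

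The hard part will be the last identity $X\rhd(Y\rhd Z)=\big(X_{(1)}(X_{(2)}\rhd Y)\big)\rhd Z$, which I would again prove by induction on the degree of $X$. The case $X=\mathbf{1}$ is the relation $\mathbf{1}\rhd W=W$, and for $X=x\in\Cal{L}$ expanding $\Delta(x)=x\otimes\mathbf{1}+\mathbf{1}\otimes x$ rewrites the right-hand side as $(xY)\rhd Z+(x\rhd Y)\rhd Z$, which the generalised left-multiplication rule (applied with $Y$ in place of $X$) identifies with $x\rhd(Y\rhd Z)$. The inductive step $X\mapsto xX$ combines the generalised left-multiplication rule on the outer $x$, the induction hypothesis for $X$, the product rule, and the coproduct compatibility just established in order to re-assemble the Sweedler legs, cocommutativity of $\Delta$ being what makes the two sides coincide. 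I expect the Sweedler bookkeeping in this last step --- keeping straight which coproduct components of $x\rhd X$ as opposed to $X$ are being acted upon --- to be the only genuinely delicate point; everything else is mechanical.
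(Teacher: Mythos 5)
The paper offers no proof of this proposition at all --- it is quoted verbatim from \cite[Proposition~3.2]{ELM2015} --- and your induction on filtration degree from the three defining relations of the preceding proposition is exactly the argument of that reference, with all the key reductions (isolating the extreme Sweedler terms for $X\rhd\mathbf{1}$, promoting $xX\rhd y$ to general $Y$ via the product rule, and the final Sweedler bookkeeping for $X\rhd(Y\rhd Z)$) correctly identified. The one ordering point to make explicit is that the primitive case $\Delta(x\rhd X)=(x\rhd X_{(1)})\otimes X_{(2)}+X_{(1)}\otimes(x\rhd X_{(2)})$, which follows directly from the derivation property of $x\rhd(\cdot)$ for $x\in\Cal{L}$, must be in hand before the induction extending the left-multiplication rule to general $Y$, but since you obtain that case independently of the later steps there is no circularity and the proof is sound.
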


From the last equality in Proposition \ref{prop:2}, an associative product, known as Grossman--Larson product, can be defined on $\Cal{U}(\Cal{L})$ as follows
\begin{equation}\label{ast}
X \ast Y := X_{(1)}\big(X_{(2)} \rhd Y\big),
\end{equation}
for all $X, Y \in \Cal{U}(\Cal{L})$. As a main example, for any $x\in \Cal{L}$ and $Y\in\mathcal U(\mathcal L)$, we find
\begin{equation}\label{GL-simple}
x \ast Y = x \rhd Y + xY,
\end{equation}
since any element of $\Cal L$ is primitive. The Grossman--Larson product~\eqref{ast} defines together with the coproduct $\Delta$ another structure of Hopf algebra on $\Cal{U}(\Cal{L})$. The corresponding antipode will be denoted by~$S_*$. The Hopf algebras $\big(\Cal{U}(\Cal{L}), \ast, \Delta \big)$ and $\big(\Cal{U}(\tilde{\Cal{L}}), \textbf{.}, \Delta \big)$ are isomorphic \cite[Section~3]{KLM06}, \cite[Section~2]{OG2008}.
\begin{Remark}
Conversely, the product of the enveloping algebra can be expressed in terms of the Grossman--Larson product and the unshuffle coproduct as follows
\begin{equation}\label{gl-rev}
XY=X_{(1)}*\big(S_*X_{(2)}\rhd Y\big).
\end{equation}
This is seen by plugging \eqref{ast} into the right-hand side of \eqref{gl-rev}.
\end{Remark}

\subsection{Free post-Lie algebras}

F.~Chapoton and M.~Livernet presented in \cite{CL01} the free pre-Lie algebra in terms of (non-planar) decorated rooted trees. Similarly, H.~Munthe-Kaas and A.~Lundervold gave in \cite{HA13} an explicit description of the free post-Lie algebra in terms of formal Lie brackets of planar decorated rooted trees. Let us briefly review this construction: a magma is a set $M$ together with a~binary operation, without any further properties. For any (non-empty) set $E$, the set of all parenthesized words on the alphabet $E$ is the free magma over $E$, denoted $M(E)$. A practical presentation of it can be given in terms of planar rooted trees. Indeed, consider the set $T^{\rm pl}_{E}$ of all planar rooted trees with vertices decorated by $E$, and let $\lbutcher$ denote the left Butcher product defined on~$T^{\rm pl}_{E}$~as
\begin{displaymath}
\sigma \lbutcher \tau=B_{+}^e(\sigma \tau_{1} \tau_{2} \cdots \tau_{k}),
\end{displaymath}
for $\sigma, \tau_{1}, \tau_{2}, \ldots, \tau_{k} \in T^{\rm pl}_{E}$ and $\tau := B^e_{+}(\tau_{1} \tau_{2} \cdots \tau_{k})$. Here, $B_{+}^e$ is the operation defined by grafting a~monomial $\tau_{1} \tau_{2} \cdots \tau_{k}$ of $E$-decorated rooted trees on a common root decorated by some element~$e$ in~$E$, to obtain a new tree. For example (in the undecorated context)
\[
\includegraphics{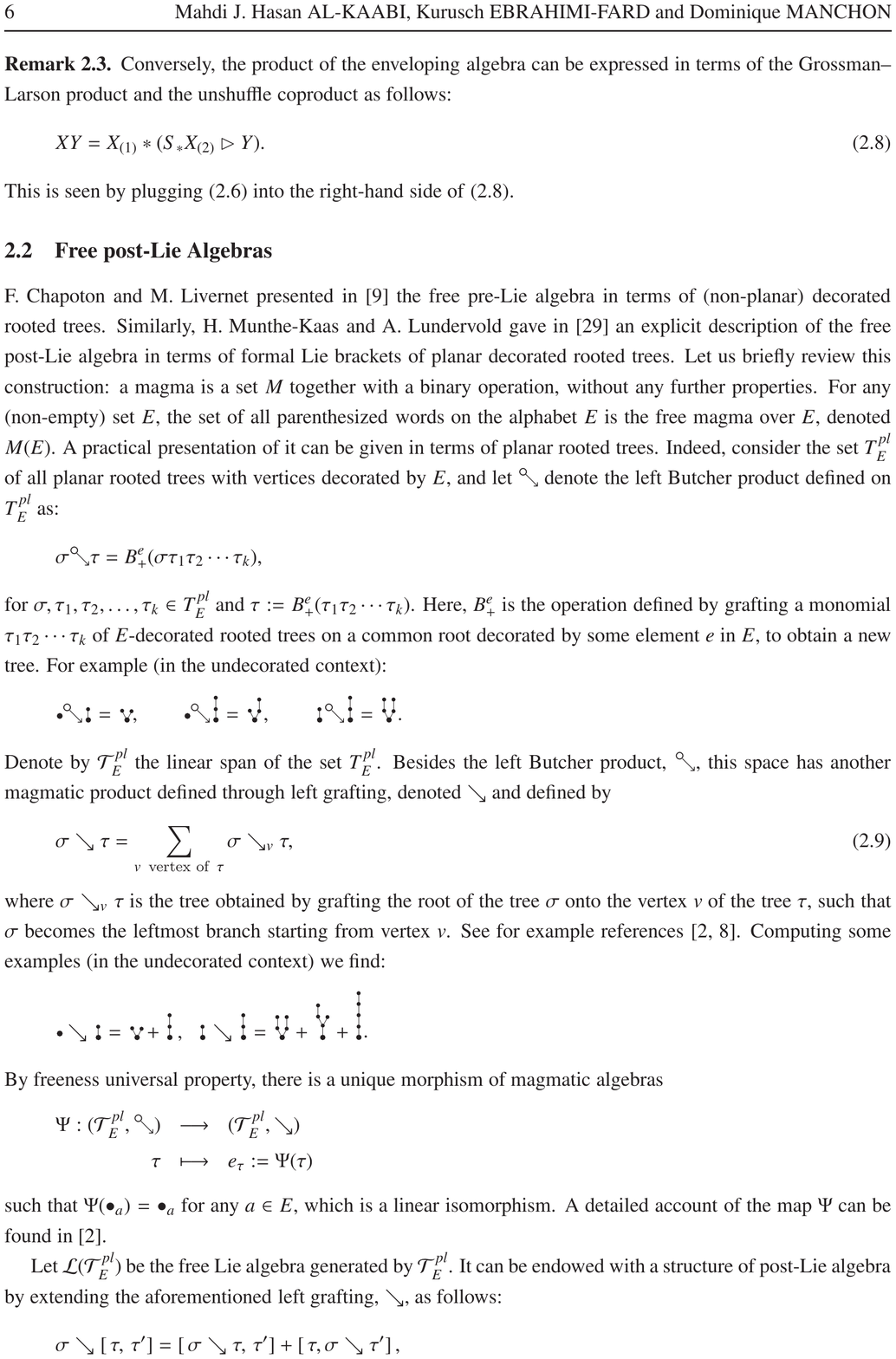}
\]
Denote by $\Cal{T}^{\rm pl}_{E}$ the linear span of the set $T^{\rm pl}_{E}$. Besides the left Butcher product, $\lbutcher$, this space has another magmatic product defined through left grafting, denoted $\searrow$ and defined by
\begin{equation}
\label{SA}
\sigma \searrow \tau = \sum_{v \text{ vertex of }\tau}{\sigma \searrow_v \tau,}
\end{equation}
where $\sigma \searrow_v \tau$ is the tree obtained by grafting the root of the tree $\sigma$ onto the vertex $v$ of the tree~$\tau$, such that $\sigma$ becomes the leftmost branch starting from vertex $v$. See for example references \cite{AM2014, BU72}. Computing some examples (in the undecorated context) we find
\[
\includegraphics{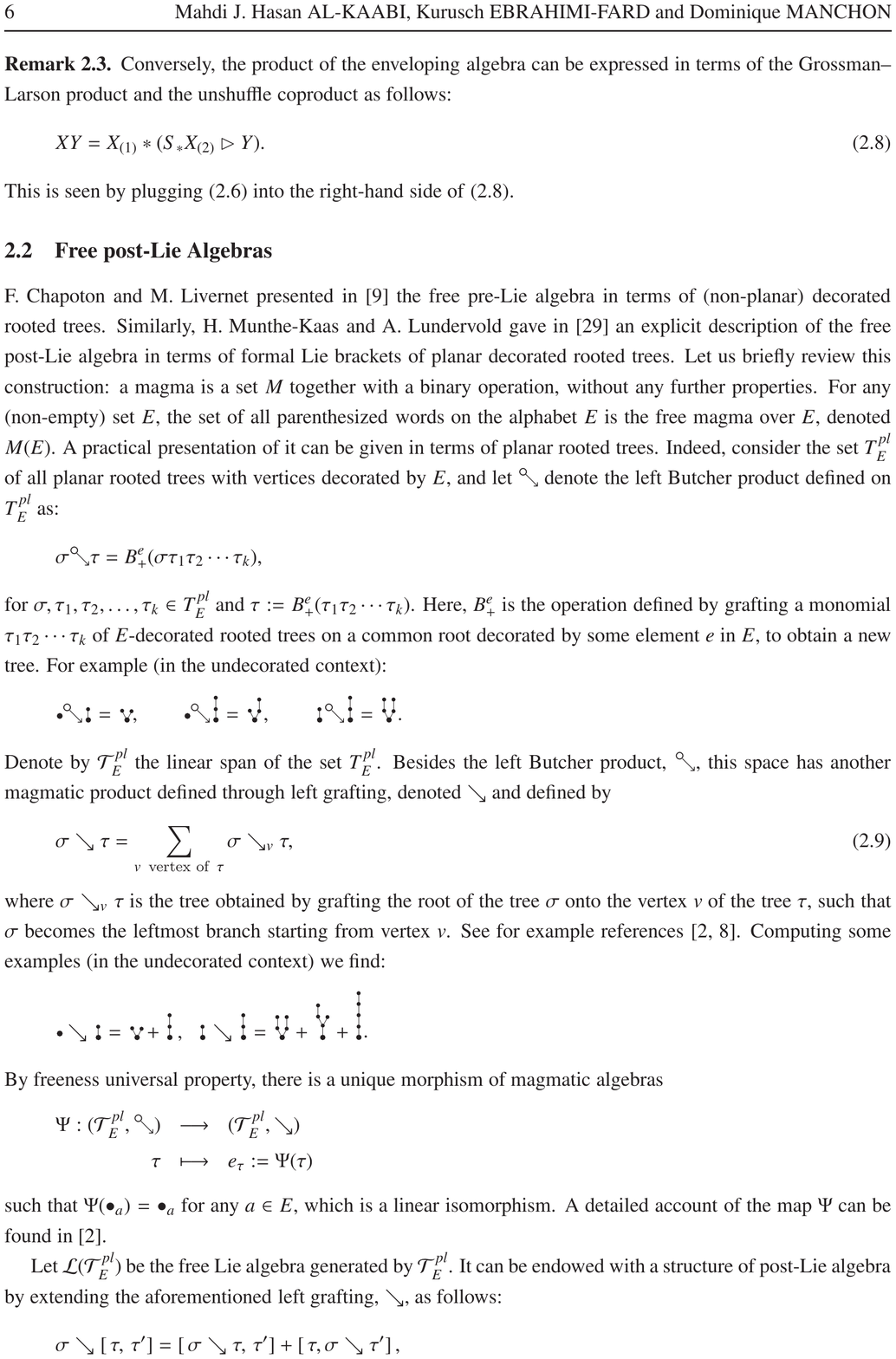}
\]
By freeness universal property, there is a unique morphism of magmatic algebras
\begin{align*}
\Psi\colon\ \big(\Cal{T}^{\rm pl}_{E},\lbutcher\big)&\longrightarrow
 \big(\Cal{T}^{\rm pl}_{E},\searrow\big),
\\
\tau &\longmapsto e_\tau:=\Psi(\tau),
\end{align*}
such that $\Psi(\bullet_a)= \bullet_a$ for any $a\in E$, which is a linear isomorphism.
A detailed account of the map $\Psi$ can be found in~\cite{AM2014}.

Let $\Cal{L}\big(\Cal{T}^{\rm pl}_{E}\big)$ be the free Lie algebra generated by $\Cal{T}^{\rm pl}_{E}$. It can be endowed with a structure of post-Lie algebra by extending the aforementioned left grafting, $\searrow$, as follows
\begin{gather*}
\sigma \searrow [ \tau, \tau']
 = [ \sigma \searrow \tau, \tau'] + [ \tau, \sigma \searrow \tau'],
\\
[\sigma,\tau] \searrow \tau'
 = a_{\searrow}(\sigma, \tau, \tau') - a_{\searrow}(\tau, \sigma, \tau'),
\end{gather*}
for all $\sigma, \tau, \tau' \in \Cal{T}^{\rm pl}_E$. The triple $\big(\Cal{L}\big(\Cal{T}^{\rm pl}_E\big), [\cdot\,,\cdot ], \searrow\!\big)$ is the free post-Lie algebra generated by~$E$~\cite{BV2007} (see also \cite{HA13,MKW08}).

Recall that an $E$-decorated planar forest $f=\tau_1 \cdots \tau_n$ is a (non-commutative) product of $E$-decorated planar rooted trees $\tau_i\in T_E^{\rm pl}$, $i=1,\ldots ,n$. Denote by $F^{\rm pl}_{E}$ the set of all $E$-decorated planar forests, and by $\Cal{F}^{\rm pl}_{E}$ its linear span. The space $\Cal{F}^{\rm pl}_{E}$ forms together with the concatenation product the free associative algebra generated by $\Cal{T}^{\rm pl}_{E}$. The left grafting, $\searrow$, defined by \eqref{SA} on~$\Cal{T}^{\rm pl}_{E}$ can be generalized to a grafting of forests as follows:
\begin{itemize}\itemsep=0pt
\item Left grafting a tree on a forest is also defined by \eqref{SA}. We thus have
\begin{displaymath}
\sigma\searrow{ff'}=(\sigma\searrow f)f'+f(\sigma\searrow f'),
\end{displaymath}
for any tree $\sigma\in T_E^{\rm pl}$ and any two forests $f,f'\in F_E^{\rm pl}$.

\item The left grafting of a forest $f =\tau_1\cdots\tau_k$ onto a forest $f'$ is the sum of forests obtained by summing over all ways of successively left grafting the trees $\tau_k,\ldots,\tau_1$ to any node of $f'$.
\end{itemize}
The well-known (planar) Grossman--Larson product on $\Cal{F}^{\rm pl}_{E}$ is defined by \cite{GL1989}
\begin{equation}
\label{glf}
f \star f' := B_{-}\big(f \searrow B^e_{+}(f')\big),
\end{equation}
where $B_{-}$ is the left inverse operation of $B^e_{+}$, which removes the root of a tree and thus produces a forest. This product endows the space $\Cal{F}^{\rm pl}_{E}$ with a structure of an non-commutative associative unital algebra, called the Grossman--Larson algebra. This algebra acts naturally on $\Cal{T}^{\rm pl}_{E}$ by extended left grafting
\begin{equation*}
(f \star f') \searrow \tau := f \searrow (f' \searrow \tau),
\end{equation*}
for all $f,f'\in \Cal F_E^{\rm pl}$ and $\tau\in \Cal T_E^{\rm pl}$. The universal enveloping algebra $\Cal{U}\big(\Cal{L}\big(\Cal{T}^{\rm pl}_{E}\big)\big)$ of the free post-Lie algebra $\Cal{L}\big(\Cal{T}^{\rm pl}_E\big)$ is the free associative algebra on $\Cal{T}^{\rm pl}_{E}$, and can therefore be identified with $\Cal{F}^{\rm pl}_{E}$. The terminology is justified by the following

\begin{Proposition}[{\cite[Proposition 3.5]{ELM2015}}]
With the identification recalled above, the Grossman--Larson product $\ast$ on $\Cal{U}\big(\Cal{L}\big(\Cal{T}^{\rm pl}_{E}\big)\big)$
is identical to the Grossman--Larson product $\star$ on $\big(\Cal{F}^{\rm pl}_{E}, \star\big)$.
\end{Proposition}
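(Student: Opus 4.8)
The plan is to show that the abstract Grossman--Larson product $\ast$ on $\Cal U\big(\Cal L\big(\Cal T^{\rm pl}_E\big)\big)$, defined in \eqref{ast} via the extended post-Lie product, agrees under the identification $\Cal U\big(\Cal L\big(\Cal T^{\rm pl}_E\big)\big)\cong\Cal F^{\rm pl}_E$ with the planar Grossman--Larson product $\star$ of \eqref{glf}. Since both $\ast$ and $\star$ are associative and unital, and since $\Cal F^{\rm pl}_E$ is generated as an associative algebra by the trees $\Cal T^{\rm pl}_E$, it is enough to compare the two products on a single tree on the left: I would first reduce to verifying $\tau \ast f = \tau \star f$ for every $\tau\in T^{\rm pl}_E$ and every forest $f\in F^{\rm pl}_E$, and then bootstrap to general left arguments by writing a forest $\tau_1\cdots\tau_k$ as an iterated $\ast$-product (respectively $\star$-product) of its trees and invoking associativity of both products together with the action identity $(f\star f')\searrow\tau = f\searrow(f'\searrow\tau)$.

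The core computation is then the single-tree case. On the abstract side, \eqref{GL-simple} gives, for a tree $\tau$ viewed as a primitive element of $\Cal U\big(\Cal L\big(\Cal T^{\rm pl}_E\big)\big)$ and any $Y\in\Cal U\big(\Cal L\big(\Cal T^{\rm pl}_E\big)\big)$,
\begin{displaymath}
\tau \ast Y = \tau \rhd Y + \tau Y,
\end{displaymath}
where $\rhd$ is the enveloping-algebra extension of the post-Lie product — which, on the free side, is exactly the extended left grafting $\searrow$ of trees onto forests described in the bulleted list, and the product $\tau Y$ is concatenation. On the planar side, \eqref{glf} reads $\tau\star f = B_-\big(\tau\searrow B^e_+(f)\big)$; writing $f=\tau_1\cdots\tau_n$, one has $B^e_+(f)=B^e_+(\tau_1\cdots\tau_n)$ a single tree with new root decorated by $e$, and $\tau\searrow B^e_+(f)$ is the sum over all vertices $v$ of $B^e_+(f)$ of the graftings $\tau\searrow_v B^e_+(f)$. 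Splitting that sum according to whether $v$ is the new root or an interior vertex of one of the $\tau_i$, the root term contributes $B^e_+(\tau\,\tau_1\cdots\tau_n)$, whose $B_-$ is $\tau f$, while the interior terms reassemble, after applying $B_-$, into $\tau\searrow f$ as defined by left grafting of a tree onto a forest. Hence $\tau\star f = \tau f + \tau\searrow f = \tau\ast f$.

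The main obstacle is bookkeeping rather than conceptual: one must check carefully that "leftmost branch at $v$" in the definition \eqref{SA} is compatible across the operations $B^e_+$, $B_-$ and concatenation, i.e.\ that removing the root after grafting onto interior vertices of $B^e_+(f)$ genuinely reproduces the forest-level left grafting $\tau\searrow f$, with the planar order preserved; this is where a small diagram (as in the two displayed examples) makes the argument transparent. After that, the promotion from $\tau\ast f=\tau\star f$ to $F\ast f=F\star f$ for arbitrary $F$ is a routine induction on the number of trees in $F$, using that $\Psi$ and the identification $\Cal U\big(\Cal L\big(\Cal T^{\rm pl}_E\big)\big)\cong\Cal F^{\rm pl}_E$ are algebra maps and that associativity lets one peel off trees one at a time. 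I would also remark that the coproduct $\Delta$ is the same unshuffle coproduct on both sides by construction, so the identification is one of Hopf algebras, not merely of algebras, although the statement as given only asserts equality of the products.
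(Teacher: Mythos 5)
The paper offers no proof of this proposition: it is quoted verbatim from the cited source, so there is no in-text argument to measure yours against. Judged on its own, your plan is sound and is essentially the natural (and, I believe, the standard) one. The core computation is correct: for a tree $\tau$, which is primitive, \eqref{GL-simple} gives $\tau\ast f=\tau f+\tau\rhd f$; the extension of $\rhd$ to $\Cal{L}\otimes\Cal{U}(\Cal{L})$ is by construction the Leibniz-rule extension of $\searrow$ from trees to forests, so $\tau\rhd f=\tau\searrow f$; and splitting the sum defining $\tau\searrow B^e_+(f)$ in \eqref{glf} according to whether $\tau$ is grafted on the added root or on a vertex of one of the $\tau_i$ yields, after applying $B_-$, exactly $\tau f+\tau\searrow f$. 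The planarity bookkeeping you flag is genuinely all that is left to check there.

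The one step you should not wave through is the bootstrap. A concatenation forest $\tau_1\cdots\tau_k$ is \emph{not} the iterated product $\tau_1\ast\cdots\ast\tau_k$ (already $\tau_1\ast\tau_2=\tau_1\tau_2+\tau_1\searrow\tau_2$), so you cannot literally ``write a forest as an iterated $\ast$-product of its trees''. What is true, and what the induction needs, is a triangularity statement: since $\tau\rhd f$ has one tree fewer than $\tau f$, the $\ast$-monomials in trees and the concatenation monomials are related by a unitriangular change of basis with respect to the number-of-trees filtration. Hence $(\Cal{F}^{\rm pl}_{E},\ast)$ is generated by $\Cal{T}^{\rm pl}_{E}$ and the unit; the iterated $\ast$- and $\star$-products of trees coincide by induction from the single-tree case; and the same triangular inversion expresses any given forest identically in terms of either family. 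With that inserted, associativity of both products finishes the argument exactly as you describe, and your closing remark that the unshuffle coproduct is shared (so the identification upgrades to one of Hopf algebras, consistently with Proposition \ref{prop:2}) is correct.
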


\subsection{Post-Lie structure on a Rota--Baxter algebra}

Recall that a unital algebra is said to be complete filtered if it is equipped with a separating complete filtration
\begin{displaymath}
\Cal A= \Cal{A}_0 \supseteq \Cal{A}_1 \supseteq
\Cal{A}_2 \supseteq \cdots \supseteq \Cal{A}_n \supseteq\cdots
\end{displaymath}
by ideals \cite{KLK04}. Separation means that the intersection of the $\Cal A_n$'s is equal to $\{0\}$, and completeness refers to the topology associated with the filtration, so that any series $a=\sum_{n\ge 1}a_n$ with $a_n\in\Cal A_n$ converges in $\Cal A$. The filtration is moreover supposed to be compatible with the product, i.e., $\Cal{A}_{n_1} \Cal{A}_{n_2}\subseteq \Cal{A}_{n_1 + n_2}$ for any $n_1,n_2\ge 0$. A Rota--Baxter algebra is said to be complete filtered if it is equipped with a separating complete filtration by Rota--Baxter ideals, i.e., by ideals $\Cal{A}_n$ stable by the Rota--Baxter operator.

The Rota--Baxter Algebra $\big(\Cal{A}, \Cal{R}\big)$ of weight $\lambda$ has a structure of a post-Lie algebra defined by the following operations:
\begin{gather}
[x, y]_{\lambda} := \lambda [x, y], \label{pl1}
\\
x \rhd y := [\Cal{R}(x), y] \label{pl2},
\end{gather}
for all $x, y \in \Cal{A}$. We leave it to the reader to show that the operations in \eqref{pl1}, \eqref{pl2} satisfy the post-Lie identities \eqref{eq1} and \eqref{eq2} (see \cite[Section~5.2]{BGN10}). As expected, the post-Lie algebra $\big(\Cal{A}, [\cdot\,, \cdot]_{\lambda}, \rhd \big)$ reduces to a (left) pre-Lie algebra in the case of a weight zero Rota--Baxter algebra. Indeed, if $\lambda =0$, the product $\rhd$ defined by~\eqref{pl2} verifies the left pre-Lie identity~\eqref{left-pl}.

\section{Baker--Campbell--Hausdorff (BCH)-recursion}\label{s2}

We give here a brief account of the Baker--Campbell--Hausdorff recursion, which was defined and explored in \cite{KLK04, KLK05, KLM06}. Let $\Cal{A} = K\left\langle\!\left\langle x, y \right\rangle\!\right\rangle$ be the free complete associative $K$-algebra of formal power series generated by non-commuting variables $x$ and $y$. The Baker--Campbell--Hausdorff expansion ${\rm BCH}(x, y)$ is the element in $\Cal{A}$ satisfying the following equation
\begin{displaymath}
\exp(x)\exp(y) = \exp\big({\rm BCH}(x, y)\big).
\end{displaymath}
The first terms are given by
\begin{align*}
{\rm BCH}(x, y)
&= x+y+\wt{{\rm BCH}}(x,y)\\
&=x+y+\frac{1}{2} [x, y] + \frac{1}{12}[x, [x, y]] - \frac{1}{12}[y, [x, y]]
-\frac{1}{24} [x, [y, [x, y]]] + \cdots,
\end{align*}
where $[x, y] := xy-yx$ is the usual commutator of $x$ and $y$ in $\Cal{A}$. See, e.g., \cite{BF12} for details.

\begin{Proposition}[{\cite[Proposition $1$]{KLM06}}]
Let $\Cal{A}$ be a complete filtered $K$-algebra, and let $\Cal{R}$ be a $K$-linear map preserving the filtration of $\Cal{A}$. There exists a unique $($usually non-linear$)$ map $\Cal{\chi}\colon \Cal{A}_1 \longrightarrow \Cal{A}_1$, such that $(\chi - {\rm id}_{\Cal{A}})(\Cal{A}_n) \subset \Cal{A}_{2n}$, for all $n \geq 1$, and
\begin{equation}
\label{result1}
{\rm BCH}\big(\Cal{R}(\chi(x)), \widetilde{\Cal{R}}(\chi(x))\big) = x,
\end{equation}
for all $x \in \Cal{A}_1$, where $\widetilde{\Cal{R}} := {\rm id}_{\Cal{A}} - \Cal{R}$. This map is bijective, and its inverse is
\begin{equation}
\label{IBCH-recursion}
\chi^{-1}(x)
= {\rm BCH}\big(\Cal{R}(x), \widetilde{\Cal{R}}(x)\big)
= x + \wt{{\rm BCH}}\big(\Cal{R}(x), \widetilde{\Cal{R}}(x)\big).
\end{equation}
\end{Proposition}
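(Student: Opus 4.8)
The plan is to construct $\chi$ by a recursive formula and then verify the two stated properties. Rewriting \eqref{result1} using ${\rm BCH}(u,v)=u+v+\wt{{\rm BCH}}(u,v)$ and $\widetilde{\Cal R}={\rm id}_{\Cal A}-\Cal R$, we get $\Cal R(\chi(x))+\widetilde{\Cal R}(\chi(x))+\wt{{\rm BCH}}\bigl(\Cal R(\chi(x)),\widetilde{\Cal R}(\chi(x))\bigr)=\chi(x)+\wt{{\rm BCH}}\bigl(\Cal R(\chi(x)),\widetilde{\Cal R}(\chi(x))\bigr)$, so \eqref{result1} is equivalent to the fixed-point equation
\begin{displaymath}
\chi(x)=x-\wt{{\rm BCH}}\bigl(\Cal R(\chi(x)),\widetilde{\Cal R}(\chi(x))\bigr).
\end{displaymath}
First I would define a map $\Phi_x\colon\Cal A_1\to\Cal A_1$ by $\Phi_x(a):=x-\wt{{\rm BCH}}\bigl(\Cal R(a),\widetilde{\Cal R}(a)\bigr)$ and show it is a contraction with respect to the filtration: since $\wt{{\rm BCH}}(u,v)$ is a sum of iterated brackets each of length $\ge 2$ in $u,v$, and $\Cal R$, $\widetilde{\Cal R}$ preserve the filtration, $\wt{{\rm BCH}}\bigl(\Cal R(a),\widetilde{\Cal R}(a)\bigr)\in\Cal A_{2n}$ whenever $a\in\Cal A_n$; more importantly, if $a-b\in\Cal A_n$ then $\Phi_x(a)-\Phi_x(b)\in\Cal A_{n+1}$. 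By completeness and separation of the filtration, $\Phi_x$ has a unique fixed point $\chi(x)$, obtained as the limit of the iteration $x\mapsto\Phi_x(x)\mapsto\Phi_x^2(x)\mapsto\cdots$. This simultaneously gives existence and uniqueness of $\chi$.

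Next I would establish the estimate $(\chi-{\rm id}_{\Cal A})(\Cal A_n)\subset\Cal A_{2n}$. From the fixed-point equation, $\chi(x)-x=-\wt{{\rm BCH}}\bigl(\Cal R(\chi(x)),\widetilde{\Cal R}(\chi(x))\bigr)$; since $x\in\Cal A_n$ forces $\chi(x)\in\Cal A_n$ (the iteration stays in $\Cal A_n$, as each $\Phi_x$ iterate differs from $x$ by an element of $\Cal A_{2n}\subseteq\Cal A_n$), and $\wt{{\rm BCH}}$ of two elements of $\Cal A_n$ lands in $\Cal A_{2n}$ by the length-$\ge 2$ observation, we get $\chi(x)-x\in\Cal A_{2n}$ as required. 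Uniqueness of a map satisfying both \eqref{result1} and this estimate follows because any such map must satisfy the fixed-point equation, which has a unique solution.

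Finally, for bijectivity and the formula for the inverse, I would simply read off from \eqref{result1} that the map $\psi(x):={\rm BCH}\bigl(\Cal R(x),\widetilde{\Cal R}(x)\bigr)=x+\wt{{\rm BCH}}\bigl(\Cal R(x),\widetilde{\Cal R}(x)\bigr)$ satisfies $\psi(\chi(x))=x$ for all $x\in\Cal A_1$; and since $(\psi-{\rm id}_{\Cal A})(\Cal A_n)\subset\Cal A_{2n}$ by the same length argument, $\psi$ also has the filtration-decreasing-perturbation form, so a symmetric contraction argument shows $\chi(\psi(x))=x$ as well (alternatively, a left inverse that is itself a bijective-looking perturbation of the identity on a complete filtered space is automatically a two-sided inverse). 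Hence $\chi=\psi^{-1}$ and \eqref{IBCH-recursion} holds. I expect the only slightly delicate point to be the bookkeeping showing that $\wt{{\rm BCH}}\bigl(\Cal R(a),\widetilde{\Cal R}(b)\bigr)$ genuinely gains a filtration degree under perturbations of $a,b$ — i.e. the contraction estimate — but this is a routine consequence of every term of $\wt{{\rm BCH}}$ being a bracket monomial of degree at least two together with compatibility of the filtration with the bracket.
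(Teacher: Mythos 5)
Your argument is correct, and it is essentially the standard route taken in the cited source \cite{KLM06} (the paper itself states the proposition without proof): rewrite \eqref{result1} as the fixed-point equation $\chi(x)=x-\wt{{\rm BCH}}\big(\Cal R(\chi(x)),\widetilde{\Cal R}(\chi(x))\big)$ and solve it by the filtration-contraction principle, the key estimate being that every Lie monomial of $\wt{{\rm BCH}}$ has degree at least two. For the two-sided inverse you can avoid the vaguer ``symmetric contraction'' remark: $x$ itself satisfies the fixed-point equation characterising $\chi(\psi(x))$, since $\Phi_{\psi(x)}(x)=\psi(x)-\wt{{\rm BCH}}\big(\Cal R(x),\widetilde{\Cal R}(x)\big)=x$, so uniqueness of the fixed point gives $\chi(\psi(x))=x$ immediately.
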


As a consequence of \eqref{result1}, we have the exponential factorization
\begin{equation}\label{exp-fact}
\exp\big(\Cal{R}(\chi(x))\big)\exp\big(\widetilde{\Cal{R}}(\chi(x))\big) = \exp(x),
\end{equation}
for any $x \in \Cal{A}_1$. Note also that \eqref{IBCH-recursion} yields the non-linear BCH-recursion
\begin{equation}
\label{BCH-recursion}
\chi(x) := x - \wt{{\rm BCH}}\big(\Cal{R}(\chi(x)), \widetilde{\Cal{R}}(\chi(x))\big),
\end{equation}
for all $x \in \Cal{A}_1$.

\begin{Lemma}[\cite{KLM06}]
Let $\Cal{A}$ be a complete filtered algebra, and let $\Cal{R}\colon \Cal{A} \longrightarrow \Cal{A}$ be a linear map preserving the filtration. The following holds:
\begin{enumerate}\itemsep=0pt
\item[$1.$] The map $\chi$ given by \eqref{BCH-recursion}, can be simplified:
\begin{equation}\label{s1-recursion}
\chi(x) = x + \wt{{\rm BCH}}\big({-}\Cal{R}(\chi(x)), x \big) \qquad \forall x \in \Cal{A}_1.
\end{equation}
\item[$2.$] If $\Cal{R}$ is an idempotent algebra homomorphism, then the map $\chi$ in \eqref{s1-recursion} is further simplified, namely $\chi(x) = x + \wt{{\rm BCH}}(-\Cal{R}(x), x)$.
\end{enumerate}
\end{Lemma}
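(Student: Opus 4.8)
The plan is to deduce both items from the closed inverse formula \eqref{IBCH-recursion}, equivalently from the characterising identity ${\rm BCH}\big(\Cal R(\chi(x)),\widetilde{\Cal R}(\chi(x))\big)=x$ of \eqref{result1}, rather than from the non-linear recursion \eqref{BCH-recursion} itself. The only facts about the BCH series I will invoke are three elementary consequences of $\exp(u)\exp(v)=\exp\big({\rm BCH}(u,v)\big)$ and of the injectivity of $\exp$ on $\Cal A_1$: associativity, ${\rm BCH}\big({\rm BCH}(u,v),w\big)={\rm BCH}\big(u,{\rm BCH}(v,w)\big)$; inversion, ${\rm BCH}(-u,u)=0$; and neutrality of the origin, ${\rm BCH}(0,w)=w$. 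Recall also that $\wt{{\rm BCH}}(u,v)={\rm BCH}(u,v)-u-v$ and that $\Cal R(z)+\widetilde{\Cal R}(z)=z$ for all $z$.

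For item~$1$, I would fix $x\in\Cal A_1$, set $y:=\chi(x)$, and use \eqref{result1} in the form $x={\rm BCH}\big(\Cal R(y),\widetilde{\Cal R}(y)\big)$. The key computation is to insert this into ${\rm BCH}\big(-\Cal R(y),x\big)$ and shift the leading term across with associativity:
\begin{align*}
{\rm BCH}\big(-\Cal R(y),x\big)
&={\rm BCH}\Big(-\Cal R(y),{\rm BCH}\big(\Cal R(y),\widetilde{\Cal R}(y)\big)\Big)\\
&={\rm BCH}\Big({\rm BCH}\big(-\Cal R(y),\Cal R(y)\big),\widetilde{\Cal R}(y)\Big)
={\rm BCH}\big(0,\widetilde{\Cal R}(y)\big)=\widetilde{\Cal R}(y),
\end{align*}
the last two equalities being inversion and neutrality of the origin. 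Hence $\wt{{\rm BCH}}\big(-\Cal R(y),x\big)={\rm BCH}\big(-\Cal R(y),x\big)+\Cal R(y)-x=\widetilde{\Cal R}(y)+\Cal R(y)-x=y-x$, that is, $x+\wt{{\rm BCH}}\big(-\Cal R(\chi(x)),x\big)=\chi(x)$, which is \eqref{s1-recursion}.

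For item~$2$, assume $\Cal R$ is an idempotent algebra homomorphism. First I would note that a filtration-preserving algebra homomorphism is a continuous homomorphism of the commutator Lie algebra, and therefore commutes termwise with the Lie series ${\rm BCH}$, so that $\Cal R\big({\rm BCH}(u,v)\big)={\rm BCH}\big(\Cal R(u),\Cal R(v)\big)$. Applying $\Cal R$ to \eqref{result1} and using $\Cal R^2=\Cal R$ together with $\Cal R\widetilde{\Cal R}=\Cal R-\Cal R^2=0$, the second BCH-argument collapses and we get ${\rm BCH}\big(\Cal R(\chi(x)),0\big)=\Cal R(x)$, i.e., $\Cal R(\chi(x))=\Cal R(x)$. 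Substituting this into the formula of item~$1$ yields $\chi(x)=x+\wt{{\rm BCH}}\big(-\Cal R(x),x\big)$. I do not expect a genuine obstacle: the one point that must be handled with some care is the interchange of $\Cal R$ with the infinite series defining ${\rm BCH}$ in item~$2$, which is precisely what completeness of the filtration and its preservation by $\Cal R$ license; everything else is routine bookkeeping between ${\rm BCH}$ and $\wt{{\rm BCH}}$ and manipulation of the BCH group law.
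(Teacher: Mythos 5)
Your argument is correct, and it is essentially the route of the proof the paper points to: the paper itself only cites \cite[Lemmas~6 and~7]{KLM06}, where item~1 is obtained from the factorization $\exp\big(\Cal R(\chi(x))\big)\exp\big(\widetilde{\Cal R}(\chi(x))\big)=\exp(x)$ by left-multiplying with $\exp\big({-}\Cal R(\chi(x))\big)$ — exactly your associativity/inversion manipulation of the BCH group law giving $\widetilde{\Cal R}(\chi(x))={\rm BCH}\big({-}\Cal R(\chi(x)),x\big)$ — and item~2 by applying the homomorphism $\Cal R$ to get $\Cal R(\chi(x))=\Cal R(x)$. Your handling of the one delicate point (commuting $\Cal R$ with the Lie series via continuity and filtration preservation) is also the right justification.
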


\begin{proof}
See \cite[Lemmas 6 and 7]{KLM06}.
\end{proof}

The BCH-recursion in the Rota--Baxter algebra framework is given as follows in the case where the weight $\lambda$ is different from zero:

\begin{Proposition}[{\cite[Proposition 11]{KLM06}}]
Let $\big(\Cal{A}, \Cal{R}\big)$ be a complete filtered Rota--Baxter algebra of weight $\lambda\neq 0$, and set $\widetilde{\Cal{R}} := -\lambda {\rm id}_{\Cal{A}}- \Cal{R}$. The $\lambda$-weighted BCH-recursion is written
\begin{equation}
\label{chi-lambda-rec}
\chi_{\lambda}(x)
= x + \frac{1}{\lambda} \wt{{\rm BCH}}\big(\Cal{R}(\chi_{\lambda}(x)),
\widetilde{\Cal{R}}(\chi_{\lambda}(x))\big),
\end{equation}
for all $x\in\Cal A_1$. It can be simplified to
\begin{equation*}
\chi_{\lambda}(x)
= x - \frac{1}{\lambda} \wt{{\rm BCH}}\big({-}\Cal{R}(\chi_{\lambda}(x)), \lambda x\big).
\end{equation*}
Its inverse is given by
\begin{equation*}
\chi_{\lambda}^{-1}(x)
= x - \frac{1}{\lambda} \wt{{\rm BCH}}\big(\Cal{R}(x), \widetilde{\Cal{R}}(x)\big).
\end{equation*}
Moreover, the factorization obtained in \eqref{exp-fact} becomes
\begin{equation}
\label{exp-moins-lambda-x}
\exp\big(\Cal{R}(\chi_{\lambda}(x))\big)\exp\big(\widetilde{\Cal{R}}
(\chi_{\lambda}(x))\big) = \exp(-\lambda x).
\end{equation}
\end{Proposition}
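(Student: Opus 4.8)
The plan is to reduce everything to the weight‑independent statements already recorded in \cite[Proposition 1]{KLM06} (see \eqref{result1}, \eqref{IBCH-recursion}), the simplified recursion \eqref{s1-recursion}, and the factorisation \eqref{exp-fact}, by an elementary rescaling of the operator. Since $\lambda\neq0$ and $\Cal R$ preserves the filtration, so does the $K$-linear map $\Cal R':=-\frac1\lambda\Cal R$, and one has ${\rm id}_{\Cal A}-\Cal R'={\rm id}_{\Cal A}+\frac1\lambda\Cal R=-\frac1\lambda\big(-\lambda\,{\rm id}_{\Cal A}-\Cal R\big)=-\frac1\lambda\wt{\Cal R}$. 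Applying \cite[Proposition 1]{KLM06} to $\Cal R'$ then produces a unique map $\chi'\colon\Cal A_1\to\Cal A_1$ with $(\chi'-{\rm id}_{\Cal A})(\Cal A_n)\subset\Cal A_{2n}$ for all $n\ge1$, such that
\[
{\rm BCH}\big(\Cal R'(\chi'(y)),({\rm id}_{\Cal A}-\Cal R')(\chi'(y))\big)=y,\qquad
(\chi')^{-1}(y)=y+\wt{{\rm BCH}}\big(\Cal R'(y),({\rm id}_{\Cal A}-\Cal R')(y)\big),
\]
together with $\chi'(y)=y+\wt{{\rm BCH}}\big({-}\Cal R'(\chi'(y)),y\big)$ from \eqref{s1-recursion} and $\exp\big(\Cal R'(\chi'(y))\big)\exp\big(({\rm id}_{\Cal A}-\Cal R')(\chi'(y))\big)=\exp(y)$ from \eqref{exp-fact}, for all $y\in\Cal A_1$.

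Next I would set $\chi_\lambda(x):=-\frac1\lambda\,\chi'(-\lambda x)$ for $x\in\Cal A_1$; this is a well-defined self-map of $\Cal A_1$ with $(\chi_\lambda-{\rm id}_{\Cal A})(\Cal A_n)\subset\Cal A_{2n}$, because multiplication by the nonzero scalar $-\lambda$ preserves the filtration degree. The only further ingredient is the pair of identities
\[
\Cal R\big(\chi_\lambda(x)\big)=\Cal R'\big(\chi'(-\lambda x)\big),\qquad
\wt{\Cal R}\big(\chi_\lambda(x)\big)=\big({\rm id}_{\Cal A}-\Cal R'\big)\big(\chi'(-\lambda x)\big),
\]
which hold by $K$-linearity of $\Cal R$, since $\Cal R\big({-}\frac1\lambda w\big)=\Cal R'(w)$ and $\wt{\Cal R}\big({-}\frac1\lambda w\big)=({\rm id}_{\Cal A}-\Cal R')(w)$. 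Substituting $y=-\lambda x$ into the first displayed identity above and using these gives ${\rm BCH}\big(\Cal R(\chi_\lambda(x)),\wt{\Cal R}(\chi_\lambda(x))\big)=-\lambda x$; since $\Cal R(\chi_\lambda(x))+\wt{\Cal R}(\chi_\lambda(x))=-\lambda\,\chi_\lambda(x)$ and ${\rm BCH}(u,v)=u+v+\wt{{\rm BCH}}(u,v)$, this rearranges to exactly \eqref{chi-lambda-rec}. Uniqueness of the solution of \eqref{chi-lambda-rec} follows from that of $\chi'$, or directly by induction on $n$: $\wt{{\rm BCH}}$ of two elements of $\Cal A_n$ lies in $\Cal A_{2n}$, so \eqref{chi-lambda-rec} pins down the class of $\chi_\lambda(x)$ in $\Cal A_n/\Cal A_{n+1}$ from lower-order data.

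The remaining three claims come out of the same substitution $y=-\lambda x$ followed by multiplication by $-\frac1\lambda$. Substituting into $\chi'(y)=y+\wt{{\rm BCH}}\big({-}\Cal R'(\chi'(y)),y\big)$ and using $\Cal R'(\chi'(-\lambda x))=\Cal R(\chi_\lambda(x))$ yields, after dividing by $-\lambda$, the simplified form of the $\lambda$-weighted recursion. Since $\chi_\lambda^{-1}(z)=-\frac1\lambda\,(\chi')^{-1}(-\lambda z)$, inserting $y=-\lambda z$ into the formula for $(\chi')^{-1}$ and using $\Cal R'(-\lambda z)=\Cal R(z)$ and $({\rm id}_{\Cal A}-\Cal R')(-\lambda z)=\wt{\Cal R}(z)$ gives $\chi_\lambda^{-1}(z)=z-\frac1\lambda\wt{{\rm BCH}}\big(\Cal R(z),\wt{\Cal R}(z)\big)$. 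Finally, substituting $y=-\lambda x$ into $\exp\big(\Cal R'(\chi'(y))\big)\exp\big(({\rm id}_{\Cal A}-\Cal R')(\chi'(y))\big)=\exp(y)$ and applying the two linearity identities again yields $\exp\big(\Cal R(\chi_\lambda(x))\big)\exp\big(\wt{\Cal R}(\chi_\lambda(x))\big)=\exp(-\lambda x)$, which is \eqref{exp-moins-lambda-x}. The argument is essentially bookkeeping; the point worth stressing, and the only place a beginner might get stuck, is that no homogeneity property of the ${\rm BCH}$ series itself is needed: the weight $\lambda$ is absorbed entirely into the rescaled map $\Cal R'=-\frac1\lambda\Cal R$ and into the replacement of the argument $x$ by $-\lambda x$, so the weight-independent results transfer verbatim. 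The only step requiring a little care is tracking the scalar $-\frac1\lambda$ and the signs when passing through \eqref{s1-recursion}.
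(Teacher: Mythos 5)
The paper offers no proof of this Proposition --- it is quoted verbatim from \cite[Proposition~11]{KLM06} --- so there is no in-text argument to measure yours against; judged on its own, your rescaling strategy is sound. Setting $\Cal R':=-\frac1\lambda\Cal R$, observing that ${\rm id}_{\Cal A}-\Cal R'=-\frac1\lambda\widetilde{\Cal R}$, applying the weight-independent results \eqref{result1}, \eqref{IBCH-recursion}, \eqref{s1-recursion} and \eqref{exp-fact} to $\Cal R'$, and then conjugating by the substitution $x\mapsto-\lambda x$ does transfer everything correctly: your derivations of \eqref{chi-lambda-rec}, of the formula for $\chi_\lambda^{-1}$, and of \eqref{exp-moins-lambda-x} all check out, and the uniqueness argument via $\wt{{\rm BCH}}(\Cal A_n,\Cal A_n)\subset\Cal A_{2n}$ is fine.

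There is, however, one point you should not have passed over silently. What your substitution into \eqref{s1-recursion} actually produces is
\[
\chi_\lambda(x)=x-\frac1\lambda\,\wt{{\rm BCH}}\big({-}\Cal R(\chi_\lambda(x)),\,-\lambda x\big),
\]
with $-\lambda x$ in the second slot, whereas the Proposition as printed has $\lambda x$ there; since $\wt{{\rm BCH}}$ is not an odd function of its arguments, these are genuinely different series. You assert that your computation ``yields the simplified form,'' which it does not literally do. In fact your version is the correct one: at order two the printed form gives $+\frac12[\Cal R(x),x]$, contradicting both \eqref{chi-lambda-rec} and the displayed coefficient $\chi_{\lambda}^{(2)}(x)=-\frac12[\Cal R(x),x]$ at the end of Section~\ref{s2}, while yours gives $-\frac12[\Cal R(x),x]$. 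One can also see this directly from \eqref{exp-moins-lambda-x}, which gives $\widetilde{\Cal R}(\chi_\lambda(x))={\rm BCH}\big({-}\Cal R(\chi_\lambda(x)),-\lambda x\big)$ and hence your formula upon expanding both sides. So the second display of the statement carries a sign typo; your proof establishes the corrected version, and you should flag the discrepancy explicitly rather than claim agreement with the printed formula.
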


The expansion $\chi_{\lambda}$ can be written as the infinite sum
\begin{equation*}
\chi^{}_{\lambda}(x) = \sum_{n \ge 1}^{}{\chi_{\lambda}^{(n)}(x)},
\end{equation*}
where $\chi_{\lambda}^{(n)}\in\Cal A_n$ is the $n$-th homogenous component of the BCH-recursion. Here, we write the components $\chi_{\lambda}^{(n)}$ up to order $n =4$ using the post-Lie algebra notation~\eqref{pl1} and~\eqref{pl2}
\begin{gather*}
\chi_{\lambda}^{(1)}(x) = x,
\\
\chi_{\lambda}^{(2)}(x) = \frac{1}{2\lambda} \big[\Cal{R}\big(\chi_{\lambda}^{(1)}(x)\big), \widetilde{\Cal{R}}\big(\chi_{\lambda}^{(1)}(x)\big)\big]
\\ \hphantom{\chi_{\lambda}^{(2)}(x) }
{}= \frac{1}{2\lambda} \big[\Cal{R}\big(\chi_{\lambda}^{(1)}(x)\big), \big({-}\lambda {\rm id}_{\Cal{A}}
- \Cal{R} \big)\big(\chi_{\lambda}^{(1)}(x)\big)\big]
 = - \frac{1}{2} \big[\Cal{R}(x), x\big]= - \frac{1}{2} x \rhd x,
 \\
\chi_{\lambda}^{(3)}(x)
= \frac{1}{2 \lambda} \big(\big[\Cal{R}\big(\chi_{\lambda}^{(1)}(x)\big), \widetilde{\Cal{R}}\big(\chi_{\lambda}^{(2)}(x)\big)\big]
+ \big[\Cal{R}\big(\chi_{\lambda}^{(2)}(x)\big), \widetilde{\Cal{R}}\big(\chi_{\lambda}^{(1)}(x)\big)\big]\big)
\\ \hphantom{\chi_{\lambda}^{(3)}(x)=}
{}+ \frac{1}{12 \lambda}\big(\big[\Cal{R}\big(\chi_{\lambda}^{(1)}(x)\big),\big[\Cal{R}\big(\chi_{\lambda}^{(1)}(x)\big), \widetilde{\Cal{R}}\big(\chi_{\lambda}^{(1)}(x)\big)\big]\big]
\\ \hphantom{\chi_{\lambda}^{(3)}(x)=}
{}- \big[\widetilde{\Cal{R}}\big(\chi_{\lambda}^{(1)}(x)\big),\big[\Cal{R}\big(\chi_{\lambda}^{(1)}(x)\big), \widetilde{\Cal{R}}\big(\chi_{\lambda}^{(1)}(x)\big)\big]\big]\big)
\\ \hphantom{\chi_{\lambda}^{(3)}(x)}
{} = \frac{1}{4} ( x \rhd x) \rhd x + \frac{1}{12}x \rhd (x \rhd x) + \frac{1}{12} [x\rhd x, x]_\lambda,
 \\
\chi_{\lambda}^{(4)}(x)= \frac{\lambda-1}{24} x \rhd \big((x \rhd x) \rhd x\big)
- \frac{\lambda + 1}{24} (x \rhd x) \rhd (x \rhd x)
+ \frac{\lambda -3}{24} \big((x \rhd x) \rhd x\big) \rhd x
\\ \hphantom{\chi_{\lambda}^{(4)}(x)=}
{}-\frac{\lambda + 1}{24} \big(x \rhd (x \rhd x)\big) \rhd x
+ \frac{1}{24} \big[x, x \rhd (x \rhd x) + (x \rhd x) \rhd x\big]_\lambda.
\end{gather*}
These coefficients are recursively computed using \eqref{BCH-recursion}.

\section{Magnus expansion}\label{s3}

W.~Magnus \cite{WM54} considered the problem of expressing the solution of the matrix-valued linear initial value problem $\dot Y(t) = M(t)Y(t)$, $Y(0)=Y_0$ as an exponential \cite{BCOR08,MP1970}
\begin{displaymath}
Y(t)=\exp\big({\Omega}(M)(t)\big)Y_0.
\end{displaymath}
The Magnus expansion, ${\Omega}(M)(t)=\log (Y(t))$, is determined by the particular differential equation
\begin{align}
\dot{\Omega}(M) &:=M + \sum_{n>0} \frac{B_n}{n!} \operatorname{ad}_{{\Omega}(M)}^{(n)}(M) \label{MagnusDE1}\\
 &= \operatorname{dexp}^{-1}_{\Omega(M)}(M) \nonumber\\
 &:= \frac{\operatorname{ad}_{\Omega(M)}}{{\rm e}^{\operatorname{ad}_{\Omega(M)}} - 1}(M) \label{MagnusDE2},
\end{align}
with $\Omega(M)(0)=0$. Here, $B_n$ are the Bernoulli numbers and $\operatorname{ad}_{M_1}^{(n)}(M_2):=\operatorname{ad}_{M_1}^{(n-1)}([M_1,M_2])$, $\operatorname{ad}_{M_1}^{(0)}(M_2)=M_2$. Defining the pre-Lie product, $(M_1 \rhd M_2)(t):=\big[\int_0^tM_1(s){\rm d}s,M_2(t)\big]$, we can rewrite \eqref{MagnusDE2} using the left-multiplication operators $L_\rhd(x):=x\rhd -$ defined in terms of the pre-Lie product:
\begin{displaymath}
\dot{\Omega}(M) = \frac{L_\rhd[{\dot{\Omega}(M)}]}{{\rm e}^{L_\rhd[{\dot{\Omega}(M)}]} - 1}(M).
\end{displaymath}

\subsection{Post-Lie Magnus expansion}

We consider now the universal enveloping algebra $\Cal{F}^{\rm pl}_E := \Cal{U}\big(\Cal{L}\big(\Cal{T}^{\rm pl}_E\big)\big)$ of the free post-Lie algebra $\big(\Cal{L}(\Cal{T}^{\rm pl}_E), [\cdot\,,\cdot ], \searrow\! \big)$, graded by the number of vertices of the forests. Denote by $\widehat{\Cal{U}\big(\Cal{L}\big(\Cal{T}^{\rm pl}_E\big)\big)}$ its completion with respect to the grading. Any element of the completion can be written as a~so-called \textit{Lie--Butcher series} \cite{ELM2015,HA13,MKW08}
\begin{equation*}
\alpha = \sum_{f \in F^{\rm pl}_E}^{}{\left\langle \alpha, f \right\rangle f},
\end{equation*}
where $\left\langle \cdot\, , \cdot \right\rangle\! \colon \widehat{\Cal{U}\big(\Cal{L}\big(\Cal{T}^{\rm pl}_E\big)\big)} \otimes \Cal{U}\big(\Cal{L}\big(\Cal{T}^{\rm pl}_E\big)\big) \rightarrow K$ is the natural pairing defined on any pair $(f,f')$ of forests by
\begin{equation*}
\langle f , f' \rangle =
\begin{cases}
0,& f \neq f', \\ 1,& f = f'.
\end{cases}
\end{equation*}
The unshuffle coproduct, $\Delta$, is naturally extended to the completion. The set $\mop{Prim}\big(F^{\rm pl}_E\big)$ consists in primitive elements (\textit{infinitesimal characters}), whereas $G\big(F^{\rm pl}_E\big)$ denotes the set of group-like elements (\textit{characters})
\begin{gather*}
\mop{Prim}(F^{\rm pl}_E) := \big\{\alpha \in \widehat{\Cal{U}\big(\Cal{L}\big(\Cal{T}^{\rm pl}_E\big)\big)}\mid
\Delta(\alpha)
= \mathbf{1} \otimes \alpha + \alpha \otimes \mathbf{1}\big\}=\widehat{\Cal{L}\big(\Cal{T}^{\rm pl}_E\big)},
\\
G(F^{\rm pl}_E) := \big\{\alpha \in \widehat{\Cal{U}\big(\Cal{L}\big(\Cal{T}^{\rm pl}_E\big)\big)}\mid \Delta(\alpha)
= \alpha \otimes \alpha \big\}.
\end{gather*}
Both products on $\Cal{U}\big(\Cal{L}\big(\Cal{T}^{\rm pl}_E\big)\big)$ -- the concatenation and the Grossman--Larson product \eqref{glf} -- can also be extended to products on the completion $\widehat{\Cal{U}\big(\Cal{L}\big(\Cal{T}^{\rm pl}_E\big)\big)}$. As a result, two different exponential functions can be defined on $\widehat{\Cal{U}\big(\Cal{L}\big(\Cal{T}^{\rm pl}_E\big)\big)}$, namely,
\begin{gather*}
\exp^{*}(f) = \sum_{n=0}^{\infty}{\frac{f^{*n}}{n!}}
= \mathbf{1} + f + \frac{1}{2} f \ast f + \frac{1}{6} f \ast f \ast f + \cdots,
\\
\exp(f) = \sum_{n=0}^{\infty}{\frac{f^{n}}{n!}}
= \mathbf{1} + f + \frac{1}{2} f f + \frac{1}{6} f f f + \cdots.
\end{gather*}
Both these exponential functions map $\mop{Prim}\big(F^{\rm pl}_E\big)$ bijectively onto $G\big(F^{\rm pl}_E\big)$. See \cite{ELM2015} for details. The post-Lie Magnus expansion $\chi$ is the bijective map from $\widehat{\Cal{L}\big(\Cal{T}^{\rm pl}_E\big)}$ onto itself defined by
\begin{displaymath}
\exp^{*}\big(\chi(f)\big) = \exp(f),
\end{displaymath}
namely,
\begin{equation}
\label{pl-Magnus}
\chi(f) = \log^{\ast}\big(\exp(f)\big).
\end{equation}
Introducing a formal commuting indeterminate $t$, it can also be described as
\[
\chi(ft) = \sum_{n\geq 1}{\chi^{(n)}(f)}t^n,
\]
where $\chi^{(n)}(f)$ is the $n$-th order component of the post-Lie Magnus expansion $\chi$. The latter is defined recursively by $\chi^{(1)}(f) = f$, and \cite{ELM2015, EI2018, EIM2017}
\begin{equation}\label{chi-rec}
\chi^{(n)}(f)
:= \frac{f^{n}}{n!} - \sum_{k=2}^{n}{\frac{1}{k!}\sum_{\substack{p_{1} + \cdots + p_{k}
= n \\ p_{i}>0}}{\chi^{(p_{1})}(f)\ast \chi^{(p_{2})}(f) \ast \cdots \ast \chi^{(p_{k})}(f)}}.
\end{equation}
The computation of the coefficients $\chi^{(n)}(f)$ for the first five values of $n$ is displayed in Appendix~\ref{C.pl-Magnus} below. They have been obtained by hand by the recursive formula \eqref{chi-rec}, using \eqref{GL-simple} repeatedly. Comparing with the computations at the end of Section \ref{s2}, one observes that, up to order $n=4$, the coefficient $\chi^{(n)}(f)$ coincides with the coefficient $\chi_\lambda^{(n)}(f)$ of the BCH-recursion in the weight $\lambda = 1$ case. We shall prove this fact at any order in Theorem \ref{main Theorem} below.

\begin{Remark}\label{rem1}
Formula \eqref{pl-Magnus} defines the post-Lie Magnus expansion in any complete filtered post-Lie algebra $L$. If the underlying Lie algebra is Abelian, then the post-Lie Magnus expansion is reduced to the so-called pre-Lie Magnus expansion. The latter already appears in \cite{AG81} and encompasses classical Magnus expansion \cite{WM54}.
\end{Remark}

\begin{Remark}\label{remMagnus1}
 We may deduce a Magnus-type differential equation similar to \eqref{MagnusDE1} for the post-Lie Magnus expansion \eqref{pl-Magnus}, by differentiating $\exp^{*}\big(\chi(ft)\big) = \exp(ft)$ with respect to~$t$. This results in
\begin{displaymath}
\dot{\chi}(ft) = \operatorname{dexp}^{\ast -1}_{- \chi(ft)}\big( \exp^*(- \chi(ft)) \rhd f \big), \qquad \chi(0)=0.
\end{displaymath}
\end{Remark}

\subsection{Inverse post-Lie Magnus expansion}

The inverse post-Lie Magnus expansion $\theta$ is the bijective map from $\widehat{\Cal{L}\big(\Cal{T}^{\rm pl}_E\big)}$ onto itself given by the following formula
\begin{equation}\label{IPLME}
\theta(f) = \log(\exp^*(f))
\end{equation}
or
\begin{equation*}
\exp(\theta(f)) = \exp^*(f).
\end{equation*}
The homogeneous component $\theta^{(n)}=\theta^{(n)}(f)$ of degree $n$ of the expansion
\begin{displaymath}
\theta(ft)=\sum_{n\geq 1}{\theta^{(n)}(f)t^n}
\end{displaymath}
is given by $\theta^{(1)}(f)=f$ and the following recursive formula \cite{ELM2015}
\begin{gather}
\theta^{(n)}(f)
= \frac{1}{n} \Bigg( \sum_{j=1}^{n-1}{\frac{1}{j!}\sum_{\substack{k_1 + \cdots + k_j = n-1 \\ k_i>0}}
{\big(\theta^{(k_1)}\theta^{(k_2)} \cdots \theta^{(k_j)}\big) \rhd f}}\nonumber
\\ \hphantom{\theta^{(n)}(f)=\frac{1}{n} \Bigg(}
+ \sum_{j=1}^{n-1}{\frac{B_j}{j!}\sum_{\substack{k_1 + \cdots + k_j = n-1 \\ k_i>0}}
{\operatorname{ad}_{\theta^{(k_1)}} \cdots \operatorname{ad}_{\theta^{(k_j)}} f}}\nonumber
\\ \hphantom{\theta^{(n)}(f)=\frac{1}{n} \Bigg(}
{}+ \sum_{j=2}^{n-1} \Bigg(\Bigg( \sum_{q=1}^{j-1}{\frac{B_q}{q!}
\sum_{\substack{k_1 + \cdots + k_q = j-1 \\ k_i>0}}\operatorname{ad}_{\theta^{(k_1)}} \cdots \operatorname{ad}_{\theta^{(k_q)}} \Bigg)}\nonumber
\\ \hphantom{\theta^{(n)}(f)=\frac{1}{n} \Bigg(}
{}\times\Bigg(\sum_{p=1}^{n-j}{\frac{1}{p!}\sum_{\substack{k_1 + \cdots + k_p = n-j \\
k_i>0}}{\big(\theta^{(k_1)}\theta^{(k_2)} \cdots \theta^{(k_p)}\big) \rhd f}} \Bigg)\Bigg)\Bigg),
\label{IPL}
\end{gather}
where $\operatorname{ad}_{\theta^{(i)}}(f):= \big[\theta^{(i)}, f\big]$, and the $B_i$'s are the Bernoulli numbers. The computation of the first~$\theta^{(n)}$'s is given in Appendix \ref{C.ipl-Magnus}.

\begin{Remark}
The same fact, described in Remark \ref{rem1}, will be repeated again in the case of the inverse post-Lie Magnus expansion. In other words, the formula in \eqref{IPL} for the inverse post-Lie Magnus expansion is reduced, in the case of commutative post-Lie algebras, to the inverse pre-Lie Magnus expansion formula described below (see also \cite{KM08,AL2011})
\begin{gather*}
W(x) := \frac{{\rm e}^{L_{\rhd}[x]} - 1}{L_{\rhd}[x]}(x) = \sum_{n=0}^{\infty}\frac{1}{(n+1)!} L^{(n)}_{\rhd}[x](x).
\end{gather*}
Modulo removal of a fictitious unit, $W$ is also known as the pre-Lie exponential \cite{AG81}.
\end{Remark}

\begin{Remark}
Similar to Remark \ref{remMagnus1}, we may deduce a Magnus-type differential equation similar to \eqref{MagnusDE1} for the inverse post-Lie Magnus expansion \cite{ELM2015,EIM2017}
\begin{displaymath}
\dot{\theta}(ft) = \operatorname{dexp}^{-1}_{- \theta(ft)}\big( \exp(\theta(ft)) \rhd f \big), \qquad \theta(0)=0.
\end{displaymath}
\end{Remark}

\section{Post-Lie Magnus expansion and BCH-recursion}\label{s4}

We now show that the Baker--Campbell--Hausdorff recursion driven by a weight $\lambda=1$ Rota--Baxter operator identifies with the Magnus expansion relative to the post-Lie structure naturally associated to the corresponding Rota--Baxter algebra.

\begin{Theorem}\label{T9}
Let $\big(\Cal{A}, \Cal{R}\big)$ be a complete filtered Rota--Baxter algebra of weight $\lambda = 1$. We have the following equality in $\widehat{\Cal U(\Cal A)}$ for any $x\in\Cal A$ and $t \in K$:
\begin{equation}\label{ET9}
\exp^{\ast}(tx) = \exp\big({-}t \widetilde{\Cal{R}}(x)\big) \exp\big({-}t \Cal{R}(x)\big),
\end{equation}
where $\widetilde{\Cal{R}}= -{\rm id}_{\Cal{A}}-\Cal{R}$, and $\ast$ is the associative product defined in \eqref{ast}, using the post-Lie product $x \rhd y = [\Cal{R}(x), y]$.
\end{Theorem}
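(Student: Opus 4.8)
The plan is to bring in the formal parameter $t$ explicitly, differentiate both sides of \eqref{ET9} with respect to $t$, and check that they satisfy the same first-order linear differential equation with the same value at $t=0$. Uniqueness of solutions (order by order in the $t$-expansion, or, when $x\in\Cal A_1$, in the complete filtered algebra) then forces the two sides to coincide.

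The heart of the matter is the identity
\begin{equation*}
x\ast Y = -\widetilde{\Cal R}(x)\,Y - Y\,\Cal R(x),
\end{equation*}
valid for every $Y\in\Cal U(\Cal A)$, with the products on the right taken in $\Cal U(\Cal A)$. Since $x\in\Cal A$ is primitive, \eqref{GL-simple} gives $x\ast Y = x\rhd Y + xY$, so it is enough to identify $Y\mapsto x\rhd Y$ with the inner derivation $Y\mapsto \Cal R(x)Y - Y\Cal R(x)$. Both are derivations of the concatenation product of $\Cal U(\Cal A)$ -- the first because $x$ is primitive -- and on a generator $y\in\Cal A$ they agree, because $x\rhd y = [\Cal R(x),y]_\lambda = [\Cal R(x),y]$ for $\lambda=1$, and $\Cal U(\Cal A)$ is the enveloping algebra of $\big(\Cal A,[\cdot\,,\cdot]_1\big)$, in which that bracket is realised as $\Cal R(x)y - y\Cal R(x)$. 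Hence the two derivations agree on all of $\Cal U(\Cal A)$, and $\widetilde{\Cal R}(x) = -x-\Cal R(x)$ puts $x\ast Y = (x+\Cal R(x))Y - Y\Cal R(x)$ in the asserted form. The identity extends linearly to power series in $t$.

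Next I would set $Y(t):=\exp^{\ast}(tx)$ and $Z(t):=\exp\big({-}t\widetilde{\Cal R}(x)\big)\exp\big({-}t\Cal R(x)\big)$. From $\frac{d}{dt}\exp^{\ast}(tx)=x\ast\exp^{\ast}(tx)$ and the identity above,
\begin{equation*}
\dot Y(t) = -\widetilde{\Cal R}(x)\,Y(t) - Y(t)\,\Cal R(x),\qquad Y(0)=\mathbf{1}.
\end{equation*}
Differentiating $Z$ by the Leibniz rule, and using that $\widetilde{\Cal R}(x)$ commutes with $\exp\big({-}t\widetilde{\Cal R}(x)\big)$ while $\Cal R(x)$ commutes with $\exp\big({-}t\Cal R(x)\big)$, gives $\dot Z(t) = -\widetilde{\Cal R}(x)\,Z(t) - Z(t)\,\Cal R(x)$ and $Z(0)=\mathbf{1}$. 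Thus $Y$ and $Z$ obey the same recursion $(n+1)F_{n+1} = -\widetilde{\Cal R}(x)F_n - F_n\Cal R(x)$, $F_0=\mathbf{1}$, on the coefficients of their $t$-expansions, which determines every $F_n$ uniquely; hence $Y(t)=Z(t)$, which is \eqref{ET9}.

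The one step that demands care is the first: one must consistently distinguish the concatenation product of $\Cal U(\Cal A)$ from the product of $\Cal A$, and track the normalisation $[\cdot\,,\cdot]_\lambda = \lambda[\cdot\,,\cdot]$ at $\lambda=1$, which is precisely what makes the extended action $x\rhd(-)$ equal to the inner derivation $\operatorname{ad}_{\Cal R(x)}$ on $\Cal U(\Cal A)$ -- so this is exactly where the hypothesis $\lambda=1$ is used. After that, the differential-equation argument is routine, and no assumption on $\Cal R$ beyond the Rota--Baxter relation is required.
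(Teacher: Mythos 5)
Your proof is correct and follows essentially the same route as the paper: the paper's Proposition establishes by induction that $\frac{{\rm d}^n}{{\rm d}t^n}\exp\big({-}t\widetilde{\Cal R}(x)\big)\exp\big({-}t\Cal R(x)\big)=\exp\big({-}t\widetilde{\Cal R}(x)\big)x^{*n}\exp\big({-}t\Cal R(x)\big)$, using in its induction step exactly your key identity $x\rhd Y=\Cal R(x)Y-Y\Cal R(x)$, and then matches Taylor coefficients at $t=0$; your first-order ODE formulation is just a repackaging of that induction. If anything, you are more explicit than the paper about why the extended product $x\rhd(-)$ coincides with the inner derivation $\operatorname{ad}_{\Cal R(x)}$ on all of $\Cal U(\Cal A)$ (two derivations agreeing on generators), a point the paper uses without comment.
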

 The proof of this theorem will rely on the following proposition:

\begin{Proposition}
In any complete filtered Rota--Baxter algebra $\big(\Cal{A}, \Cal{R}\big)$ of weight $\lambda = 1$, we have the following identity in $\widehat{\Cal U(\Cal A)}$:
\begin{equation}
\label{diffeq1}
\frac{{\rm d}^n}{{\rm d}t^n} \exp\big({-}t \widetilde{\Cal{R}}(x)\big) \exp\big({-}t \Cal{R}(x)\big)
= \exp\big({-}t \widetilde{\Cal{R}}(x)\big) x^{\ast n} \exp\big({-}t \Cal{R}(x)\big).
\end{equation}
\end{Proposition}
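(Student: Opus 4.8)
The plan is to prove \eqref{diffeq1} by induction on $n$, the case $n=0$ being trivial since $x^{\ast 0}=\mathbf{1}$. Throughout, the products between $\exp\big({-}t\widetilde{\Cal{R}}(x)\big)$, elements of $\widehat{\Cal{U}(\Cal{A})}$, and $\exp\big({-}t\Cal{R}(x)\big)$ are taken in the associative concatenation algebra $\widehat{\Cal{U}(\Cal{A})}$; the Grossman--Larson product $\ast$ enters only through the fixed element $x^{\ast n}$, and all series converge in $\widehat{\Cal{U}(\Cal{A})}$ by completeness of the filtration.

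For the inductive step, I differentiate the order-$n$ identity once more in $t$. Since $\widetilde{\Cal{R}}(x)$ commutes with $\exp\big({-}t\widetilde{\Cal{R}}(x)\big)$, and $\Cal{R}(x)$ with $\exp\big({-}t\Cal{R}(x)\big)$, the Leibniz rule gives
\begin{displaymath}
\frac{{\rm d}^{n+1}}{{\rm d}t^{n+1}}\exp\big({-}t\widetilde{\Cal{R}}(x)\big)\exp\big({-}t\Cal{R}(x)\big)
= \exp\big({-}t\widetilde{\Cal{R}}(x)\big)\big({-}\widetilde{\Cal{R}}(x)\, x^{\ast n} - x^{\ast n}\,\Cal{R}(x)\big)\exp\big({-}t\Cal{R}(x)\big),
\end{displaymath}
so the statement reduces to the purely algebraic identity $x^{\ast (n+1)} = {-}\widetilde{\Cal{R}}(x)\, x^{\ast n} - x^{\ast n}\,\Cal{R}(x)$ in $\widehat{\Cal{U}(\Cal{A})}$.

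To prove this, I would combine three facts. First, since every element of $\Cal{A}$ is primitive, \eqref{GL-simple} gives $x^{\ast(n+1)} = x\ast x^{\ast n} = x\rhd x^{\ast n} + x\, x^{\ast n}$. Second, for $x\in\Cal{A}$ the extended post-Lie operator $x\rhd{-}$ agrees on all of $\widehat{\Cal{U}(\Cal{A})}$ with $\operatorname{ad}_{\Cal{R}(x)}=[\Cal{R}(x),{-}]$ (concatenation commutator): both are derivations of the concatenation product vanishing on $\mathbf{1}$, and they coincide on the generating subspace $\Cal{A}$ by the definition \eqref{pl2} $x\rhd y=[\Cal{R}(x),y]$ together with the fact that in weight $\lambda=1$ the commutator of two elements of $\Cal{A}$ inside $\Cal{U}(\Cal{A})$ equals their commutator in $\Cal{A}$; hence $x\rhd x^{\ast n}=\Cal{R}(x)\, x^{\ast n}-x^{\ast n}\,\Cal{R}(x)$. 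Third, because $\widetilde{\Cal{R}}={-}{\rm id}_{\Cal{A}}-\Cal{R}$, one has $x={-}\widetilde{\Cal{R}}(x)-\Cal{R}(x)$ --- this is the one place where weight $\lambda=1$ is essential --- so $x\, x^{\ast n}={-}\widetilde{\Cal{R}}(x)\, x^{\ast n}-\Cal{R}(x)\, x^{\ast n}$. Adding the two contributions, the terms $\pm\,\Cal{R}(x)\, x^{\ast n}$ cancel, leaving exactly ${-}\widetilde{\Cal{R}}(x)\, x^{\ast n}-x^{\ast n}\,\Cal{R}(x)$, as required. (The case $n=0\to 1$ is this same computation with $x^{\ast 0}=\mathbf{1}$; it reproves that the first $t$-derivative of $\exp\big({-}t\widetilde{\Cal{R}}(x)\big)\exp\big({-}t\Cal{R}(x)\big)$ equals $\exp\big({-}t\widetilde{\Cal{R}}(x)\big)\, x\, \exp\big({-}t\Cal{R}(x)\big)$.)

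The computation itself is short, so there is no serious obstacle; the only delicate points are bookkeeping the two products --- the concatenation product of $\widehat{\Cal{U}(\Cal{A})}$, in which the $t$-derivatives live, versus the Grossman--Larson product $\ast$ appearing in $x^{\ast n}$ --- and the small lemma that $x\rhd{-}$ prolongs to $\operatorname{ad}_{\Cal{R}(x)}$ on the whole completed enveloping algebra, where the choice of weight $1$ (so that the underlying Lie bracket of $\Cal{U}(\Cal{A})$ is the plain commutator) is used. Once \eqref{diffeq1} is established, Theorem~\ref{T9} follows by evaluating both sides at $t=0$ and matching all $t$-derivatives with those of $\exp^{\ast}(tx)$.
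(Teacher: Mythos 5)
Your proposal is correct and follows essentially the same route as the paper: induction on $n$, differentiating once via the Leibniz rule and the commutation of $\Cal{R}(x)$ with $\exp\big({-}t\Cal{R}(x)\big)$, then identifying $\Cal R(x)\,x^{\ast n}-x^{\ast n}\Cal R(x)=x\rhd x^{\ast n}$ and invoking \eqref{GL-simple} to recognise $x\,x^{\ast n}+x\rhd x^{\ast n}=x\ast x^{\ast n}=x^{\ast(n+1)}$. The only cosmetic difference is that you isolate the algebraic identity $x^{\ast(n+1)}={-}\widetilde{\Cal R}(x)\,x^{\ast n}-x^{\ast n}\Cal R(x)$ and justify explicitly that $x\rhd{-}$ extends to $\operatorname{ad}_{\Cal R(x)}$ on $\widehat{\Cal U(\Cal A)}$, a point the paper uses implicitly.
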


\begin{proof}
The proof goes by induction. The base case $k=0$ is trivial. For $k=1$, we have in~$\widehat{\Cal U(\Cal A)}$:
\begin{align*}
\frac{\rm d}{{\rm d}t} \exp\big({-}t \widetilde{\Cal{R}}(x)\big) \exp\big({-}t \Cal{R}(x)\big)
= {}&\exp\big(t ({\rm id}_{\Cal{A}} + \Cal{R})(x)\big) ({\rm id}_{\Cal{A}} + \Cal{R})(x) \exp\big({-}t \Cal{R}(x)\big)
\\
&-\exp\big(t ({\rm id}_{\Cal{A}} + \Cal{R})(x)\big) \Cal{R}(x) \exp\big({-}t \Cal{R}(x)\big)
\\
={}& \exp\big({-}t \widetilde{\Cal{R}}(x)\big) x \exp\big({-}t \Cal{R}(x)\big).
\end{align*}
Now, suppose that the statement is true in the case $k=n-1$, i.e.,
\begin{equation*}
\frac{{\rm d}^{n-1}}{{\rm d}t^{n-1}} \exp\big({-}t \widetilde{\Cal{R}}(x)\big) \exp\big({-}t \Cal{R}(x)\big)
= \exp\big({-}t \widetilde{\Cal{R}}(x)\big) x^{\ast {n-1}} \exp\big({-}t \Cal{R}(x)\big).
\end{equation*}
{\samepage We then get
\begin{gather*}
\frac{{\rm d}^{n}}{{\rm d}t^{n}} \exp\big({-}t \widetilde{\Cal{R}}(x)\big) \exp\big({-}t \Cal{R}(x)\big)
\\ \qquad
{} = \frac{\rm d}{{\rm d}t} \frac{{\rm d}^{n-1}}{{\rm d}t^{n-1}} \exp\big({-}t \widetilde{\Cal{R}}(x)\big) \exp\big({-}t \Cal{R}(x)\big)
 = \frac{\rm d}{{\rm d}t} \exp\big({-}t \widetilde{\Cal{R}}(x)\big) x^{\ast {n-1}} \exp\big({-}t \Cal{R}(x)\big)
\\ \qquad
{}= \exp\big({-}t \widetilde{\Cal{R}}(x)\big) ({\rm id}_{\Cal{A}} + \Cal{R})(x) x^{\ast n-1}
\exp\big({-}t \Cal{R}(x)\big)
\\ \qquad\hphantom{=}
{} - \exp\big({-}t \widetilde{\Cal{R}}(x)\big) x^{\ast n-1}\Cal{R}(x) \exp\big({-}t \Cal{R}(x)\big)
\\ \qquad
{}= \exp\big({-}t \widetilde{\Cal{R}}(x)\big) \big(x x^{\ast n-1} + \Cal{R}(x) x^{\ast n-1} - x^{\ast n-1} \Cal{R}(x)\big) \exp\big({-}t \Cal{R}(x)\big)
\\ \qquad
{} = \exp\big({-}t \widetilde{\Cal{R}}(x)\big) \big(x x^{\ast n-1} + x \rhd x^{\ast n-1}\big) \exp\big({-}t \Cal{R}(x)\big)
\\ \qquad
{}= \exp\big({-}t \widetilde{\Cal{R}}(x)\big) \big( x \ast x^{\ast n-1}\big) \exp\big({-}t \Cal{R}(x)\big)
 = \exp\big({-}t \widetilde{\Cal{R}}(x)\big) x^{\ast {n}} \exp\big({-}t \Cal{R}(x)\big),
\end{gather*}

}

\noindent which means that \eqref{diffeq1} is true for $k=n$, and it is true for all $n \geq 0$. This ends the proof.
\end{proof}

\begin{proof}[Proof of Theorem \ref{T9}]
We have that
\begin{equation*}
\frac{{\rm d}^n}{{\rm d}t^n} \exp^{\ast}(tx) = x^{\ast n} \exp^{\ast}(tx),
\end{equation*}
thus,
\begin{equation*}
\left.\frac{{\rm d}^n}{{\rm d}t^n}\right\vert_{t=0} \exp^{\ast}(tx) = \left.\frac{{\rm d}^n}{{\rm d}t^n}\right\vert_{t=0} \exp\big({-}t \widetilde{\Cal{R}}(x)\big) \exp\big({-}t \Cal{R}(x)\big)\qquad \hbox{for all}\quad n\geq 0.
\end{equation*}
One can therefore conclude that both members of \eqref{ET9} do coincide as infinite formal series.
\end{proof}
\begin{Theorem}\label{main Theorem}
The post-Lie Magnus expansion $\chi$, described in \eqref{pl-Magnus}, coincides with the weighted BCH-recursion $\chi_{\lambda}$ recursively given by \eqref{chi-lambda-rec}, with weight $\lambda=1$.
\end{Theorem}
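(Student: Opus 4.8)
The plan is to show that the BCH-recursion $\chi_{\lambda}$ of \eqref{chi-lambda-rec} with $\lambda=1$, hereafter written $\chi_{1}$, satisfies the very equation that defines the post-Lie Magnus expansion. Indeed, by \eqref{pl-Magnus} the latter is characterised by $\exp^{\ast}\big(\chi(a)\big)=\exp(a)$ in $\widehat{\Cal U(\Cal A)}$, where $\exp$ is the concatenation exponential and $\ast$ is the Grossman--Larson product \eqref{ast} built from the post-Lie product $x\rhd y=[\Cal R(x),y]$ of \eqref{pl2}. Since $\exp^{\ast}$ restricts to a bijection from the primitive onto the group-like elements of $\widehat{\Cal U(\Cal A)}$, with inverse $\log^{\ast}$, it suffices to prove that $\exp^{\ast}\big(\chi_{1}(a)\big)=\exp(a)$ for all $a\in\Cal A_{1}$; applying $\log^{\ast}$ will then give $\chi_{1}(a)=\log^{\ast}\big(\exp(a)\big)=\chi(a)$, and hence $\chi=\chi_{1}$ since $a\in\Cal A_{1}$ is arbitrary. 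Throughout one uses that for $\lambda=1$ the Lie bracket $[\cdot\,,\cdot]_{1}$ of \eqref{pl1} is the ordinary commutator of $\Cal A$, so that $\Cal A$ embeds into $\Cal U(\Cal A)$ as a Lie subalgebra for the commutator of the concatenation product.

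The first step is to specialise Theorem \ref{T9}: taking $t=1$ and substituting $x=\chi_{1}(a)\in\Cal A_{1}$ in \eqref{ET9} gives
\begin{equation*}
\exp^{\ast}\big(\chi_{1}(a)\big)=\exp\big({-}\widetilde{\Cal R}(\chi_{1}(a))\big)\exp\big({-}\Cal R(\chi_{1}(a))\big)
\end{equation*}
in $\widehat{\Cal U(\Cal A)}$, the right-hand side being a product of concatenation exponentials of elements of $\Cal A$.

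The second step is to evaluate that right-hand side by means of the exponential factorisation \eqref{exp-moins-lambda-x}. For $\lambda=1$ the latter reads $\exp\big(\Cal R(\chi_{1}(a))\big)\exp\big(\widetilde{\Cal R}(\chi_{1}(a))\big)=\exp(-a)$ in $\Cal A$; inverting both sides gives $\exp\big({-}\widetilde{\Cal R}(\chi_{1}(a))\big)\exp\big({-}\Cal R(\chi_{1}(a))\big)=\exp(a)$ in $\Cal A$. Writing $u=-\widetilde{\Cal R}(\chi_{1}(a))$ and $v=-\Cal R(\chi_{1}(a))$, this says ${\rm BCH}(u,v)=a$, an identity whose two sides lie in $\Cal A$ and in which ${\rm BCH}$ is a universal Lie series in the bracket $[\cdot\,,\cdot]_{1}$. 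Because the commutator of the concatenation product on $\Cal U(\Cal A)$ restricts on $\Cal A$ to this same bracket, the identity ${\rm BCH}(u,v)=a$ --- and hence $\exp(u)\exp(v)=\exp(a)$ --- holds verbatim in $\widehat{\Cal U(\Cal A)}$ for the concatenation product. Combining this with the display of the first step yields $\exp^{\ast}\big(\chi_{1}(a)\big)=\exp(a)$, which is what was needed.

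The one genuinely delicate step is the second one: one must keep carefully separated the two products coexisting on $\Cal U(\Cal A)$ --- concatenation and the Grossman--Larson product $\ast$ --- and check that the factorisation \eqref{exp-moins-lambda-x}, which a priori is an identity inside the Rota--Baxter algebra $\Cal A$, does lift to the concatenation Hopf structure of $\Cal U(\Cal A)$. This is precisely where the weight-one normalisation of \eqref{pl1}, \eqref{pl2} is used; all the rest is Theorem \ref{T9} together with the formal fact that $\exp^{\ast}$ and $\log^{\ast}$ are mutually inverse.
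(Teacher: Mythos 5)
Your proof is correct and follows essentially the same route as the paper's: both combine Theorem~\ref{T9} with the exponential factorisation \eqref{exp-moins-lambda-x} at $\lambda=1$, the paper merely phrasing the comparison through the inverse expansions $\theta_{\rm BCH}=\chi_{1}^{-1}$ and $\theta=\chi^{-1}$ while you compare $\chi_{1}$ and $\chi$ directly via $\exp^{\ast}\big(\chi_{1}(a)\big)=\exp(a)$. Your explicit check that \eqref{exp-moins-lambda-x}, an identity a priori in $\Cal A$, lifts to the concatenation product of $\widehat{\Cal U(\Cal A)}$ because ${\rm BCH}$ is a universal Lie series in the bracket $[\cdot\,,\cdot]_{1}$ is a point the paper passes over in silence, and is a welcome addition.
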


\begin{proof}
From equation \eqref{exp-moins-lambda-x}, specialized to $\lambda=1$, and by setting $\theta_{\rm BCH} :=\chi_{1}^{-1}$, we obtain that
\begin{displaymath}
\exp\big({-}\theta_{\rm BCH}(tx)\big)
= \exp\big(t \Cal{R}(x)\big) \exp\big(t \widetilde{\Cal{R}}(x)\big)
\end{displaymath}
in $\widehat{\Cal U(\Cal A)}$, which is equivalent to
\begin{equation}\label{res.2}
\exp\big(\theta_{\rm BCH}(tx)\big)
= \exp\big({-}t \widetilde{\Cal{R}}(x)\big) \exp\big({-}t \Cal{R}(x)\big).
\end{equation}
From \eqref{IPLME}, \eqref{ET9} and \eqref{res.2} we have
\begin{equation}\label{res.3}
\exp\big(\theta_{\rm BCH}(tx)\big) = \exp^{\ast}(tx)= \exp\big(\theta(tx)\big).
\end{equation}
Then the two $\theta$'s, namely the inverse BCH-recursion in~\eqref{res.2} and the inverse post-Lie Magnus expansion \eqref{res.3}, do coincide.
\end{proof}

\appendix

\section{Calculations on post-Lie Magnus expansion}\label{C.pl-Magnus}

The first five elements of the post-Lie Magnus expansion are
\begin{gather*}
\chi^{(1)}(f) = f,
\\
\chi^{(2)}(f) = -\frac{1}{2}f \rhd f,
\\
\chi^{(3)}(f) = \frac{1}{12}f \rhd (f \rhd f) + \frac{1}{4}(f \rhd f) \rhd f + \frac{1}{12}[f \rhd f, f],
\\
\chi^{(4)}(f) = -\frac{1}{12}\big( (f \rhd f) \rhd (f \rhd f)
+ (f \rhd (f \rhd f)) \rhd f+ ((f \rhd f) \rhd f) \rhd f \big)
\\ \hphantom{\chi^{(4)}(f) =}
{} + \frac{1}{24}\big([f, f \rhd (f \rhd f)] + [f, (f \rhd f) \rhd f]\big),
\\
\chi^{(5)}(f) = -\frac{1}{720}f \rhd \big( f \rhd \big(f \rhd (f \rhd f)\big)\big)
\\ \hphantom{\chi^{(5)}(f) = }
{}+ \frac{1}{144}\big((f \rhd f) \rhd \big(f \rhd (f \rhd f)\big) - f \rhd \big(((f \rhd f) \rhd f) \rhd f\big)
\\ \hphantom{\chi^{(5)}(f) = }
{}-\! f \rhd \big((f \rhd (f \rhd f)) \rhd f\big) \!- f \rhd \big(f \rhd ((f \rhd f) \rhd f)\big) \!+ 5\big(f \rhd (f \rhd f)\big) \rhd (f \rhd f)
\\ \hphantom{\chi^{(5)}(f) = }
{} +5\big((f \rhd f) \rhd f\big) \rhd (f \rhd f) + 6\big((f \rhd f) \rhd (f \rhd f)\big) \rhd f
\\ \hphantom{\chi^{(5)}(f) = }
{} + 3\big((f \rhd (f \rhd f)\big) \rhd f ) \rhd f+
 3\big(f \rhd (f \rhd (f \rhd f))\big) \rhd f
\\ \hphantom{\chi^{(5)}(f) = }
{}+ 3\big(f \rhd ((f \rhd f) \rhd f)\big) \rhd f + 3(f \rhd f) \rhd \big((f \rhd f) \rhd f\big)
\\ \hphantom{\chi^{(5)}(f) = }
{}+ 3\big(((f \rhd f) \rhd f) \rhd f\big) \rhd f \big) + \frac{1}{180}\big[f, [f, f \rhd (f \rhd f)] - f \rhd (f \rhd (f \rhd f))\big]
\\ \hphantom{\chi^{(5)}(f) = }
{}- \frac{1}{120} [ f \rhd f, f \rhd (f \rhd f)]- \frac{1}{36}[f, (f \rhd f) \rhd (f \rhd f)]- \frac{1}{72}\big[f, f \rhd \big((f \rhd f) \rhd f\big)
\\ \hphantom{\chi^{(5)}(f) = }
{}+ \big(f \rhd (f \rhd f)\big) \rhd f + \big((f \rhd f) \rhd f\big) \rhd f\big]-\frac{1}{360}\big[ f \rhd f, [f, f \rhd f]\big]
\\ \hphantom{\chi^{(5)}(f) = }
{} + \frac{1}{720}\big[f, \big[f, [f, f \rhd f]\big]\big].
\end{gather*}

\section{Computations on the inverse post-Lie Magnus expansion}
\label{C.ipl-Magnus}
Here, we calculate the first five inverse post-Lie Magnus elements:
\begin{gather*}
\theta^{(1)}(f) = f,
\\
\theta^{(2)}(f) = \frac{1}{2} f \rhd f ,
\\
\theta^{(3)}(f) = \frac{1}{6} f \rhd (f \rhd f) + \frac{1}{12} [f, f \rhd f] ,
\\
\theta^{(4)}(f) = \frac{1}{24} \big( f \rhd \big(f \rhd (f \rhd f)\big) + [f, f \rhd (f \rhd f)] \big),
\\
\theta^{(5)}(f) = \frac{1}{120} f \rhd \big( f \rhd \big(f \rhd (f \rhd f)\big)\big) + \frac{1}{80} \big[f, f \rhd \big(f \rhd (f \rhd f)\big)\big]
\\ \hphantom{\theta^{(5)}(f) =}
{}+ \frac{1}{720} \big( \big[f, [f, f \rhd (f \rhd f)]\big] - \big[f, \big[f, [f, f \rhd f]\big]\big]\big)
\\ \hphantom{\theta^{(5)}(f) =}
{}+ \frac{1}{120} [f \rhd f, f \rhd (f \rhd f)] - \frac{1}{240} \big[f \rhd f, [f, f \rhd f]\big].
\end{gather*}

\subsection*{Acknowledgments} The first author was funded by the Iraqi Ministry of Higher Education and Scientific Research. He would like to thank the Department of Mathematics at the University of Bergen, Norway, for warm hospitality during a visit in 2021, which was partially supported by the project Pure Mathematics in Norway, funded by Trond Mohn Foundation and Troms{\o} Research Foundation.
He would also like to thank Mustansiriyah University, College of Science, Mathematics Department for their support in carrying out this work. The second author is supported by the Research Council of Norway through project 302831 ``Computational Dynamics and Stochastics on Manifolds'' (CODYSMA). The third author is supported by Agence Nationale de la Recherche, projet CARPLO ANR20-CE40-0007. We thank the anonymous referees for their remarks and suggestions which greatly helped us to improve the presentation of the present paper.

\pdfbookmark[1]{References}{ref}
\LastPageEnding


\begin{thebibliography}{99}
\footnotesize\itemsep=0pt

\bibitem{AG81}
Agrachev A.A., Gamkrelidze R.V., Chronological algebras and nonstationary
 vector fields, \href{https://doi.org/10.1007/BF01084595}{\textit{J.~Sov. Math.}} \textbf{17} (1981), 1650--1675.

\bibitem{AM2014}
Al-Kaabi M.J.H., Monomial bases for free pre-{L}ie algebras, \textit{S\'em.
 Lothar. Combin.} \textbf{71} (2014), B71b, 19~pages, \href{https://arxiv.org/abs/1309.4243}{arXiv:1309.4243}.

\bibitem{AAFCCC020}
Arnal A., Casas F., Chiralt C., A note on the {B}aker--{C}ampbell--{H}ausdorff
 series in terms of right-nested commutators, \href{https://doi.org/10.1007/s00009-020-01681-6}{\textit{Mediterr.~J. Math.}}
 \textbf{18} (2021), 53, 16~pages, \href{https://arxiv.org/abs/2006.15869}{arXiv:2006.15869}.

\bibitem{BGN10}
Bai C., Guo L., Ni X., Nonabelian generalized {L}ax pairs, the classical
 {Y}ang--{B}axter equation and {P}ost{L}ie algebras, \href{https://doi.org/10.1007/s00220-010-0998-7}{\textit{Comm. Math.
 Phys.}} \textbf{297} (2010), 553--596, \href{https://arxiv.org/abs/0910.3262}{arXiv:0910.3262}.

\bibitem{B60}
Baxter G., An analytic problem whose solution follows from a simple algebraic
 identity, \href{https://doi.org/10.2140/pjm.1960.10.731}{\textit{Pacific~J. Math.}} \textbf{10} (1960), 731--742.

\bibitem{BCOR08}
Blanes S., Casas F., Oteo J.A., Ros J., The {M}agnus expansion and some of its
 applications, \href{https://doi.org/10.1016/j.physrep.2008.11.001}{\textit{Phys. Rep.}} \textbf{470} (2009), 151--238,
 \href{https://arxiv.org/abs/0810.5488}{arXiv:0810.5488}.

\bibitem{BF12}
Bonfiglioli A., Fulci R., Topics in noncommutative algebra. The theorem of
 Campbell, Baker, Hausdorff and Dynkin, \textit{Lecture Notes in Math.}, Vol.~2034, \href{https://doi.org/10.1007/978-3-642-22597-0}{Springer}, Heidelberg, 2012.

\bibitem{BU72}
Butcher J.C., An algebraic theory of integration methods, \href{https://doi.org/10.2307/2004720}{\textit{Math. Comp.}}
 \textbf{26} (1972), 79--106.

\bibitem{CL01}
Chapoton F., Livernet M., Pre-{L}ie algebras and the rooted trees operad,
 \href{https://doi.org/10.1155/S1073792801000198}{\textit{Int. Math. Res. Not.}} \textbf{2001} (2001), 395--408,
 \href{https://arxiv.org/abs/math.QA/0002069}{arXiv:math.QA/0002069}.

\bibitem{CP13}
Chapoton F., Patras F., Enveloping algebras of pre{L}ie algebras, {S}olomon
 idempotents and the {M}agnus formula, \href{https://doi.org/10.1142/S0218196713400134}{\textit{Internat.~J. Algebra Comput.}}
 \textbf{23} (2013), 853--861, \href{https://arxiv.org/abs/1201.2159}{arXiv:1201.2159}.

\bibitem{CEMK18}
Curry C., Ebrahimi-Fard K., Munthe-Kaas H.Z., What is a post-{L}ie algebra and
 why is it useful in geometric integration, in Numerical Mathematics and
 Advanced Applications~-- {ENUMATH} 2017, \textit{Lect. Notes Comput. Sci.
 Eng.}, Vol.~126, \href{https://doi.org/10.1007/978-3-319-96415-7_38}{Springer}, Cham, 2019, 429--437, \href{https://arxiv.org/abs/1712.09415}{arXiv:1712.09415}.

\bibitem{CEO20}
Curry C., Ebrahimi-Fard K., Owren B., The {M}agnus expansion and post-{L}ie
 algebras, \href{https://doi.org/10.1090/mcom/3541}{\textit{Math. Comp.}} \textbf{89} (2020), 2785--2799,
 \href{https://arxiv.org/abs/1808.04156}{arXiv:1808.04156}.

\bibitem{KLK04}
Ebrahimi-Fard K., Guo L., Kreimer D., Spitzer's identity and the algebraic
 {B}irkhoff decomposition in p{QFT}, \href{https://doi.org/10.1088/0305-4470/37/45/020}{\textit{J.~Phys.~A: Math. Gen.}}
 \textbf{37} (2004), 11037--11052, \href{https://arxiv.org/abs/hep-th/0407082}{arXiv:hep-th/0407082}.

\bibitem{KLK05}
Ebrahimi-Fard K., Guo L., Kreimer D., Integrable renormalization. {II}.~{T}he
 general case, \href{https://doi.org/10.1007/s00023-005-0211-2}{\textit{Ann. Henri Poincar\'e}} \textbf{6} (2005), 369--395,
 \href{https://arxiv.org/abs/hep-th/0403118}{arXiv:hep-th/0403118}.

\bibitem{KLM06}
Ebrahimi-Fard K., Guo L., Manchon D., Birkhoff type decompositions and the
 {B}aker--{C}ampbell--{H}ausdorff recursion, \href{https://doi.org/10.1007/s00220-006-0080-7}{\textit{Comm. Math. Phys.}}
 \textbf{267} (2006), 821--845, \href{https://arxiv.org/abs/math-ph/0602004}{arXiv:math-ph/0602004}.

\bibitem{ELM2015}
Ebrahimi-Fard K., Lundervold A., Munthe-Kaas H.Z., On the {L}ie enveloping
 algebra of a post-{L}ie algebra, \textit{J.~Lie Theory} \textbf{25} (2015),
 1139--1165, \href{https://arxiv.org/abs/1410.6350}{arXiv:1410.6350}.

\bibitem{KM08}
Ebrahimi-Fard K., Manchon D., A {M}agnus- and {F}er-type formula in dendriform
 algebras, \href{https://doi.org/10.1007/s10208-008-9023-3}{\textit{Found. Comput. Math.}} \textbf{9} (2009), 295--316,
 \href{https://arxiv.org/abs/0707.0607}{arXiv:0707.0607}.

\bibitem{EI2018}
Ebrahimi-Fard K., Mencattini I., Post-{L}ie algebras, factorization theorems
 and isospectral flows, in Discrete mechanics, Geometric Integration and
 {L}ie--{B}utcher series, \textit{Springer Proc. Math. Stat.}, Vol.~267,
 Editors K.~Ebrahimi-Fard, M.~Lin\'an~Barbero, \href{https://doi.org/10.1007/978-3-030-01397-4_7}{Springer}, Cham, 2018, 231--285,
 \href{https://arxiv.org/abs/1711.02694}{arXiv:1711.02694}.

\bibitem{EIM2017}
Ebrahimi-Fard K., Mencattini I., Munthe-Kaas H.Z., Post-{L}ie algebras and
 factorization theorems, \href{https://doi.org/10.1016/j.geomphys.2017.04.007}{\textit{J.~Geom. Phys.}} \textbf{119} (2017), 19--33,
 \href{https://arxiv.org/abs/1701.07786}{arXiv:1701.07786}.

\bibitem{EP19}
Ebrahimi-Fard K., Patras F., From iterated integrals and chronological calculus
 to {H}opf and {R}ota--{B}axter algebras, in Algebra and Applications~2:
 Combinatorial Algebra and Hopf Algebras, \href{https://doi.org/10.1002/9781119880912.ch2}{ISTE Ltd}, 2021, 55--118,
 \href{https://arxiv.org/abs/1911.08766}{arXiv:1911.08766}.

\bibitem{GL1989}
Grossman R., Larson R.G., Hopf-algebraic structure of families of trees,
 \href{https://doi.org/10.1016/0021-8693(89)90328-1}{\textit{J.~Algebra}} \textbf{126} (1989), 184--210.

\bibitem{Guo12}
Guo L., An introduction to {R}ota--{B}axter algebra, \textit{Surveys of Modern
 Mathematics}, Vol.~4, International Press, Somerville, MA, Higher Education
 Press, Beijing, 2012.

\bibitem{IserlesNorsett99}
Iserles A., N{\o}rsett S.P., On the solution of linear differential equations
 in {L}ie groups, \href{https://doi.org/10.1098/rsta.1999.0362}{\textit{R.~Soc. Lond. Philos. Trans. Ser. A Math. Phys. Eng.
 Sci.}} \textbf{357} (1999), 983--1019.

\bibitem{WM54}
Magnus W., On the exponential solution of differential equations for a linear
 operator, \href{https://doi.org/10.1002/cpa.3160070404}{\textit{Comm. Pure Appl. Math.}} \textbf{7} (1954), 649--673.

\bibitem{AL2011}
Manchon D., A short survey on pre-{L}ie algebras, in Noncommutative Geometry
 and Physics: Renormalisation, Motives, Index Theory, Editor A.L.~Carey, \textit{ESI
 Lect. Math. Phys.}, \href{https://doi.org/10.4171/008-1/3}{Eur. Math. Soc.}, Z\"urich, 2011, 89--102.

\bibitem{Men2020}
Mencattini I., Quesney A., Silva P., Post-symmetric braces and integration of
 post-{L}ie algebras, \href{https://doi.org/10.1016/j.jalgebra.2020.03.018}{\textit{J.~Algebra}} \textbf{556} (2020), 547--580,
 \href{https://arxiv.org/abs/1907.12081}{arXiv:1907.12081}.

\bibitem{MP1970}
Mielnik B., Pleba\'nski J., Combinatorial approach to
 {B}aker--{C}ampbell--{H}ausdorff exponents, \textit{Ann. Inst. H.~Poincar\'e
 Sect. A (N.S.)} \textbf{12} (1970), 215--254.

\bibitem{HA13}
Munthe-Kaas H.Z., Lundervold A., On post-{L}ie algebras, {L}ie--{B}utcher
 series and moving frames, \href{https://doi.org/10.1007/s10208-013-9167-7}{\textit{Found. Comput. Math.}} \textbf{13} (2013),
 583--613, \href{https://arxiv.org/abs/1203.4738}{arXiv:1203.4738}.

\bibitem{MKW08}
Munthe-Kaas H.Z., Wright W.M., On the {H}opf algebraic structure of {L}ie group
 integrators, \href{https://doi.org/10.1007/s10208-006-0222-5}{\textit{Found. Comput. Math.}} \textbf{8} (2008), 227--257,
 \href{https://arxiv.org/abs/math.AC/0603023}{arXiv:math.AC/0603023}.

\bibitem{OG2008}
Oudom J.-M., Guin D., On the {L}ie enveloping algebra of a pre-{L}ie algebra,
 \href{https://doi.org/10.1017/is008001011jkt037}{\textit{J.~K-Theory}} \textbf{2} (2008), 147--167, \href{https://arxiv.org/abs/math.QA/0404457}{arXiv:math.QA/0404457}.

\bibitem{Sp56}
Spitzer F., A combinatorial lemma and its application to probability theory,
 \href{https://doi.org/10.2307/1993051}{\textit{Trans. Amer. Math. Soc.}} \textbf{82} (1956), 323--339.

\bibitem{BV2007}
Vallette B., Homology of generalized partition posets, \href{https://doi.org/10.1016/j.jpaa.2006.03.012}{\textit{J.~Pure Appl.
 Algebra}} \textbf{208} (2007), 699--725, \href{https://arxiv.org/abs/math.AT/0405312}{arXiv:math.AT/0405312}.

\end{thebibliography}
\end{document}